\documentclass[11pt]{amsart}
\usepackage[margin=1in]{geometry}
\usepackage{amsmath}
\usepackage{amsfonts}
\usepackage{adjustbox}
\usepackage{stackengine}
\usepackage{amsthm}
\usepackage{blkarray}
\usepackage{verbatim}
\usepackage{mathrsfs}
\usepackage{amssymb}
\usepackage{mathtools}
\usepackage{xcolor}
\usepackage{algorithm}
\usepackage{algorithmic}
\usepackage{bbold}
\usepackage{cite}
\usepackage{latexsym}
\usepackage{booktabs}
\usepackage{bm}
\usepackage{graphicx}
\usepackage{textcomp}
\usepackage{subfigure}
\usepackage[toc,page]{appendix}
\usepackage{url}
\usepackage{hyperref}
\usepackage{multicol}
\usepackage[thinlines]{easytable}
\usepackage{multirow}
 \usepackage{todonotes}
\usepackage{tikz}
\usepackage{fancyhdr} 
\usepackage{array}
\fancyhf{}
\cfoot{\thepage}
\setlength{\headheight}{13pt}

\pagestyle{fancy}
\usepackage{array}
\usepackage{makecell}

\addtolength{\textheight}{-\baselineskip}
\addtolength{\footskip}{\baselineskip}
\let\oldtextit\textit 
\renewcommand\textit[1]{\oldtextit{\color{RoyalBlue}#1}}
\definecolor{RoyalBlue}{cmyk}{1, 0.50, 0, 0}
\definecolor{Amber}{rgb}{1.0, 0.49, 0.0}
\definecolor{zzwwqq}{rgb}{0.6,0.4,0}

\theoremstyle{definition}
\newtheorem{definition}{Definition}[section]

\newtheorem{corollary}[definition]{Corollary}
\newtheorem{lemma}[definition]{Lemma}

\newtheorem{theorem}[definition]{Theorem}

\newtheorem{exampleth}[definition]{Example}
\newenvironment{example}{\begin{exampleth}}{\hfill $\diamond$\\ \end{exampleth}}

\newtheorem{remark}[definition]{Remark}

\newcommand{\CC}{\mathbb C}
\newcommand{\RR}{\mathbb R}

\newcommand{\cR}{\mathcal R}

\newcommand{\cM}{\mathcal M}

\newcommand{\kk}{\mathbf k}
\newcommand{\cC}{\mathcal C}
\newcommand{\cK}{\mathcal K}

\newcommand{\cS}{\mathcal S}

\newcommand{\Mdnr}{{{\cM}^r_{d,n}}}
\newcommand{\Snr}{{{\cS}^r_n}}

\newcommand{\tropR}{\text{trop}_{\mathcal{R}^+}}
\newcommand{\tropRg}{\text{trop}_{\mathcal{R}}}
\newcommand{\tropC}{\text{trop}_{\mathcal{C}^+}}
\newcommand{\tropCg}{\text{trop}_{\mathcal{C}}}
\newcommand{\trop}{\text{trop}}
\newcommand{\puiseux}{\{\!\{t\}\!\}}
\newcommand{\red}[1]{\textcolor{red}{#1}}
\newcommand{\blue}[1]{\textcolor{blue}{#1}}
\DeclareMathOperator{\val}{val}
\makeatletter
\def\@settitle{\begin{center}%
  \baselineskip13\p@\relax
    \Large
\@title
  \end{center}%
}
\makeatother
\title{Real and Positive Tropicalizations of Symmetric Determinantal Varieties}

\author{Abeer Al Ahmadieh}
\email{aahmadieh3@gatech.edu}

\author{May Cai}
\email{mcai@gatech.edu}

\author{Josephine Yu}
\email{jyu@math.gatech.edu}
\address{School of Mathematics, Georgia Tech, Atlanta GA 30332, US}

\date{}

\begin{document}

\begin{abstract}
    We study real and positive tropicalizations of the varieties of low rank symmetric matrices over real or complex Puiseux series.  We show that real tropicalization coincides with complex tropicalization for rank two and corank one cases.  We also show that the two notions of positive tropicalization introduced by Speyer and Williams coincide for symmetric rank two matrices, but they differ for symmetric corank one matrices.   
\end{abstract}
\maketitle
\section{Introduction}

Let $\cC = \CC\puiseux$ and $\cR=\RR\puiseux$ be the fields of Puiseux series over complex and real numbers  respectively.  The tropical semiring $(\RR, \oplus,\odot)$ is the set of real numbers with the tropical addition and the tropical multiplication given by $x\oplus y = \min(x,y)$ and $x\odot y = x+y$ respectively.
Then the Puiseux series fields admit a valuation to the tropical semiring
\[
\val: \cC \text{ or } \cR \rightarrow \RR \text{ given by } \val(a)= \deg({\rm lt}(a))
\]
where ${\rm lt}(a)$ is the leading term, or the lowest degree term, of a Puiseux series $a$.

For an algebraic variety $X$ in $\cC^n$ or  $(\cC^*)^n$, its (complex) tropicalization is defined to be 
\[
\trop_\cC(X) = \overline{\{\val(x) \mid x \in X \cap (\cC^*)^n \}}\subset \RR^n
\]
where the valuation is taken coordinatewise and the closure is under Euclidean topology.  The Fundamental Theorem and the Structure Theorem hold for these tropical varieties~\cite{MaclaganSturmfels}.  Analogously, we can consider points in $X$  with real coordinates and define its \textit{real tropicalization}:
\[
\trop_\cR(X) = \overline{\{\val(x) \mid x \in X \cap (\cR^*)^n\}}\subset \RR^n.
\]

Let $\cC^+$ and $\cR^+$ be the subsets of $\cC$ and $\cR$ respectively with positive leading terms.  Following \cite{BLS}, we define the \textit{positive part} of the tropical variety to be
\[
\trop_{\cC^+}(X) = \overline{\{\val(x) \mid x \in X \cap (\cC^+)^n\}}\subset \RR^n.
\]
and the \textit{really positive part} to be
\[
\trop_{\cR^+}(X) = \overline{\{\val(x) \mid x \in X \cap (\cR^+)^n\}}\subset \RR^n.
\]
The real tropicalization $\trop_{\cR}(X)$ and the really positive tropicalization  $\trop_{\cR^+}(X)$ arise as real tropicalizations of real semialgebraic sets and satisfy the Fundamental Theorem of Real Tropical Geometry~\cite{JSY}. Their real tropical bases were studied in \cite{tabera2015real}. When $X$ is defined over $\RR$ they coincide with the logarithmic limit sets of (the positive part of) the real variety.  

On the other hand, the positive tropicalization  $\trop_{\cC^+}(X)$ is a subcomplex of the Gr\"obner complex of the ideal defining $X$, corresponding to the initial ideals with positive real roots.  The positive and really positive parts coincide for Grassmannians with the Pl\"ucker embedding~\cite{SpeyerWilliams} and for linear spaces, but they may differ in general~\cite{VINZANT2012392}. Some sufficient conditions for a point in the positve tropicalization to lie in the really positive tropicalization can be found in \cite{VINZANT2012392, rose2024computing} . Note that our notations are different from those used in ~\cite{SpeyerWilliams}, ~\cite{VINZANT2012392}, and ~\cite{rose2024computing}. 

Our main goal here is to study real and positive tropicalization of varieties of low rank symmetric matrices.  There have been several prior works on the tropical geometry of low rank matrices.  Develin, Santos, and Sturmfels \cite{develin2005rank} defined three notions of ranks for matrices over the tropical semiring.  The Kapranov rank is the one compatible with the tropicalization process described above.

The (complex) tropicalization of the variety of $d\times n$ matrices of rank at most 2 was studied by Markwig and the third author in~\cite{MarkwigYu}.  It coincides with the space of bicolored trees, is shellable, and therefore has nice topology.  Motivated by matrix completion problems, Bernstein described the algebraic matroid of this rank $2$ variety, as well as the skew-symmetric counterpart, in~\cite{Bernstein_rank2}.   Hultman and Jonsson described the topology of the space of tropical matrices of Barvinok rank at most two in~\cite{HultmanJonsson}.  

For symmetric matrices, Cartwright and Chan defined three notions of rank~\cite{CartwrightChan}. Zwick provided two additional notions, the symmetric tropical rank and the symmetric Kapranov rank, and showed that they coincide in the rank $2$ case and in the corank $1$ case.  Again, the tropical Kapranov rank is the one arising from the tropicalization process.  The space of symmetric tropical rank $2$ matrices coincides with the space of symmetric bicolored trees and is shellable~\cite{cai2024symmetric}.  We will see later in Corollary~\ref{cor:symmBarv} a characterization of symmetric Barvinok rank $2$ matrices using the bicolored trees.

All the works mentioned above have been on complex tropicalization.  The positive tropicalization of the space of rank $2$ matrices is described as the space of Barvinok rank 2 matrices by Ardila in~\cite{ardila2004tropical} and Brandenburg, Loho, and Sinn in~\cite{BLS}. (See Remark~\ref{rem:gap}.) The latter paper also gave some criterion for positivity in rank $3$ case and described the positive tropicalization of the determinantal hypersurface.

In this paper we describe the real, positive, and really positive tropicalization of the varieties of \textit{symmetric} $n \times n$ matrices of rank at most $r$, where $r = 2$ or $r = n-1$.  In particular, we compare their real versus complex tropicalizations and positive versus really positive tropicalizations.  We also fill in some gaps in the literature for the general (nonsymmetric) cases.
A summary of our results is shown in Table~\ref{tab:results}.

 \begin{table}[h]
\def\arraystretch{1.5}
    \centering
        \caption{A summary of results}
    \label{tab:results}
    \begin{tabular}{|c|c|c|}
               \hline
                         & ${\mathbf{trop}}_\cR\stackrel{?}{=}\mathbf{trop}_\cC$ & $\mathbf{ trop}_{\cR^+}\stackrel{?}{=}\mathbf{trop}_{\cC^+}$\\
          \hline
    \textbf{ rank 2 } & Yes (\!\!\cite[Theorem 6.5]{develin2005rank}) & Yes (Theorem~\ref{thm:nonsymmrk2}) \\
    \hline
           \textbf{symmetric rank 2 } & Yes (Theorem~\ref{thm:symmrank2real}) & Yes (Theorem~\ref{thm:symmrk2})\\
           \hline
                      \textbf{singular } & Yes (Corollary~\ref{cor:corank1})& Yes (Theorem~\ref{thm:nonsymm-hypersurface-real-positive})\\
           \hline
           \textbf{symmetric singular } & Yes (Theorem~\ref{thm:SingularSymmetric}) & No (Example~\ref{ex:no-real-positive-lift}, Theorem~\ref{thm:tropR+-for-symm-hypersurface}) \\
           \hline
    \end{tabular}
\end{table}
 
\section{Rank 2}

For positive integers $d,n,r$, let $\Mdnr \subset \cC^{d \times n}$ be the variety of $d \times n$ matrices over $\cC$ of rank at most $r$. It is  defined by vanishing of $(r+1) \times (r+1)$ minors of a $d \times n$ matrix of indeterminates.  Our goal is to studay real, positive, and really positive tropicalization of these varieties.

We now recall a few different notions of ranks for tropical matrices introduced in~\cite{develin2005rank}.  For a tropical matrix $A$ in $\RR^{n \times d}$, its \textit{Kapranov rank} over a valued field $\cK$ is defined to be the smallest integer $r$ such that $A$ lies in $\trop_\cK(\Mdnr)$.  
This is equivalent to $A$ being the image under entrywise valuation of a rank $\leq r$ matrix over $\cK$.  In this case we say that $A$ has a rank $r$ {\em lift} to $\cK$.

A tropical matrix $A=(a_{ij})_{1\leq i,j\leq n}$ is \textit{tropically singular} if the minimum in 
\[
\val(\det(A)) = \oplus_{\sigma\in S_n} \left( a_{1\sigma(1)}\odot \cdots \odot a_{n\sigma(n)}\right)  = \min_{\sigma\in S_n} \left( a_{1\sigma(1)}+\cdots+a_{n\sigma(n)} \right)
\]
is attained at least twice. 
The \textit{tropical rank} a matrix $A$ is the size of the largest tropically nonsingular submatrix of $A$.   The \textit{Barvinok rank} of a $d \times n$ tropical matrix $A$ is the smallest integer $r$ such that $A = B \odot C$
 where $B$ is $d\times r$ and $C$ is $r \times n$.  It was shown in~\cite{develin2005rank} that for any matrix we have
 \[
\text{tropical rank} \leq \text{Kapranov rank over $\cK$} \leq \text{Barvinok rank}
 \]
 where the first inequality is valid for any field $\cK$ with nontrivial valuation, and the second inequality is valid when $\cK$ is a Puiseux series field over an infinite field.  The inequalities can be both strict in general.  However
    if a matrix has tropical rank $2$, then it  has Kapranov rank $2$ as well over Puiseux series  with coefficients in an infinite field~\cite[Theorem 6.5]{develin2005rank}.  This immediately implies the following.

\begin{lemma}
    For any $d,n \geq 2$,
        the real and complex tropicalizations coincide for the variety of $d \times n$ rank $2$ matrices:
        \[\trop_\cR(\cM^2_{d,n}) = \trop_\cC(\cM^2_{d,n}). \]
\end{lemma}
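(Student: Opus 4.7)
The plan is to reduce the statement to Theorem 6.5 of \cite{develin2005rank}. The inclusion $\trop_\cR(\cM^2_{d,n}) \subseteq \trop_\cC(\cM^2_{d,n})$ is immediate from $\cR \subseteq \cC$, so the only direction requiring work is the reverse inclusion.

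For that direction, I would first use closedness of $\trop_\cR(\cM^2_{d,n})$ to reduce to showing that every $A$ of the form $\val(M)$, with $M \in \cM^2_{d,n} \cap (\cC^*)^{d \times n}$, admits a rank-at-most-$2$ lift to $\cR$. Such an $A$ is a finite tropical matrix of Kapranov rank at most $2$ over $\cC$, hence of tropical rank at most $2$ by the inequality $\text{tropical rank} \leq \text{Kapranov rank}$ recalled above. I would then invoke Theorem 6.5 of \cite{develin2005rank}, applied to Puiseux series over the infinite field $\RR$, to produce the required rank-at-most-$2$ lift over $\cR$. This yields $A \in \trop_\cR(\cM^2_{d,n})$, and passing to the Euclidean closure finishes the argument.

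I do not anticipate a serious obstacle, since the coincidence of tropical and Kapranov rank in the rank-at-most-$2$ regime is field-agnostic in \cite{develin2005rank}, requiring only that the coefficient field be infinite; the lemma is essentially a packaging of that theorem over $\RR$ instead of $\CC$. The only minor bookkeeping points are handling the closure under Euclidean topology, which is automatic from the set-theoretic inclusion of the two images of the valuation map, and the degenerate tropical rank $\leq 1$ case, in which the matrix is already of Barvinok rank $\leq 1$ and therefore lifts over any field, in particular over $\cR$.
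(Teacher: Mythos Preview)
Your proposal is correct and follows essentially the same approach as the paper: the paper simply states that the lemma follows immediately from \cite[Theorem 6.5]{develin2005rank}, and you have spelled out exactly that deduction (the easy inclusion, then tropical rank $\leq$ Kapranov rank $\leq 2$, then Theorem 6.5 over the infinite field $\RR$). The extra bookkeeping you mention about closures and the rank $\leq 1$ case is fine and not in tension with the paper's one-line justification.
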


 This is not true for rank $3$ matrices.
    \begin{lemma}\label{lem:rnk3}
         The real and complex tropicalizations differ for the variety of $9 \times 12$ matrices of rank at most $3$:
   \[\trop_\cR(\cM^3_{9,12}) \subsetneq \trop_\cC(\cM^3_{9,12}). \]
    \end{lemma}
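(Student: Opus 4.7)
The plan is to exhibit a single matrix $A \in \RR^{9 \times 12}$ lying in $\trop_\cC(\cM^3_{9,12}) \setminus \trop_\cR(\cM^3_{9,12})$. The guiding mechanism is matroid representability: any rank-$r$ matrix over a field $\KK$ encodes a rank-$r$ matroid representable over $\KK$, so a rank-$3$ matroid representable over $\CC$ but not over $\RR$ should translate into a gap between the complex and real Kapranov ranks of a suitable tropical matrix. In particular, the first nontrivial ranks where tropical rank and Kapranov rank can differ is rank $3$, and any real/complex asymmetry must already appear at this rank.

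Concretely, I would first locate a rank-$3$ matroid $\mathcal{M}$ on $12$ elements whose realization space has complex points but no real points; such configurations are standard inputs for the Mn\"ev-style universality phenomena. From a complex realization $V = (v_1, \ldots, v_{12}) \in \CC^{3 \times 12}$ of $\mathcal{M}$, I would build a $9 \times 3$ matrix $U$ whose nine rows encode circuit certificates for $\mathcal{M}$, and then rescale entries of $U$ and $V$ by appropriate powers of $t$ so that the entrywise valuation of $UV \in \cC^{9 \times 12}$ is a specific $A \in \RR^{9 \times 12}$. Since $UV$ has rank at most $3$, this produces the desired lift over $\cC$, showing $A \in \trop_\cC(\cM^3_{9,12})$.

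To rule out any rank-$3$ lift over $\cR$, I would argue that the combinatorial structure of $A$, namely the pattern of which terms attain the minima in its $4 \times 4$ tropical minors, is rigid enough that any factorization $\tilde U \tilde V$ over $\cR$ with $\tilde U \in \cR^{9 \times 3}$ and $\tilde V \in \cR^{3 \times 12}$ must have columns of $\tilde V$ whose leading coefficients realize the matroid $\mathcal{M}$ over $\RR$. This would contradict the chosen non-representability of $\mathcal{M}$.

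The main obstacle is the calibration step: the valuations of the entries of $U$ and $V$ must be chosen so that on the one hand $\val(UV)$ lands exactly on the prescribed $A$, and on the other hand the tropical data of $A$ remain rigid enough to pin down the matroid realized by the leading coefficients of any hypothetical real lift. A cleaner, more economical route, if available, is to quote an explicit $9 \times 12$ example already worked out in the existing literature on tropical matrix ranks, in which case the proof reduces to exhibiting the matrix together with its complex lift and verifying the real obstruction directly.
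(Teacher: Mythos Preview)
Your high-level idea---use a rank-$3$ matroid representable over $\CC$ but not over $\RR$ to separate complex and real Kapranov rank---is exactly right, and indeed is the mechanism behind the paper's proof. However, your concrete implementation is only a sketch with an acknowledged ``calibration'' gap, and the cleaner route you hope for in your last paragraph \emph{does} exist and is what the paper uses.

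The paper invokes the cocircuit matrix construction from \cite[Section~7]{develin2005rank}: for a matroid $M$, the $0/1$ matrix $C(M)$ with rows indexed by the ground set and columns by the cocircuits (entry $0$ if the element lies in the cocircuit, $1$ otherwise) has tropical rank equal to the rank of $M$, and has Kapranov rank equal to the rank of $M$ over $\kk\puiseux$ if and only if $M$ is representable over the infinite field $\kk$. This black-boxes both directions simultaneously---the existence of a complex lift and the nonexistence of a real lift---and completely bypasses your calibration step and your rigidity argument for the leading coefficients.

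Note also a dimension mismatch in your plan: you look for a matroid on $12$ elements, but the $9\times 12$ shape comes from a matroid on $9$ elements with $12$ cocircuits. The paper uses the ternary affine plane $AG(2,3)$, a rank-$3$ matroid on $9$ points with $12$ cocircuits (the $12$ lines), representable over $\CC$ (indeed over $\mathbb{F}_3$) but not over $\RR$. Its cocircuit matrix is then the desired $9\times 12$ example, with no further work required.
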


\begin{proof}
Let us recall the following results from~\cite[Section 7]{develin2005rank}. For a matroid $M$ the cocircuit matrix $C(M)$ has rows indexed by the ground set of $M$ and columns indexed by the cocircuits of $M$. It has a $0$ in entry $(i, j)$ if the $i$-th element is in the $j$-th cocircuit and a $1$ otherwise.  Then the tropical rank of $C(M)$ is the rank of the matroid $M$.  If the Kapranov rank of $C(M)$ over Puiseux series with a ground field $\kk$ is equal to the rank of $M$, then $M$ is representable over $\kk$. If $\kk$ is an infinite field, then the converse also holds.

The ternary affine plane is a rank $3$ matroid on a $9$ element ground set with $12$ cocircuits which is representable over $\CC$ but not over $\RR$~\cite{Oxley}.  
Thus its cocircuit matrix has Kapranov rank~$=3$ over $\cC$ but Kapranov rank $> 3$ over $\cR$.  In other words, it lies in $\trop_\cC(\cM^3_{9,12})$ but not in $\trop_\cR(\cM^3_{9,12})$.
\end{proof}

The space of $d \times n$ matrices of tropical rank $2$ has a simplicial fan structure as the space of \textit{bicolored trees} with $d+n$ leaves, which are trees with $d$ red leaves and $n$ blue leaves such that removing any internal edge partitions the leaves into two parts with both colors in each part  
\cite{MarkwigYu}.
The construction is as follows.
For each $d\times n$ tropical matrix $A$ of tropical rank $2$, the tropical convex hull of its columns, modulo tropical scaling, is a contractible one dimensional polyhedral complex, or a tree. This tree can be made balanced by attaching one infinite ray in each of the coordinate directions $e_1,\dots,e_d$ in a unique way, making it into a tropical line. To obtain the desired bicolored tree, attach a blue leaf labeled $i$ at the position of the $i$-th column of $A$ in the tree, and a red leaf~$j$ to where the infinite ray $e_{j}$ is attached.  The length of each internal edge is the length of the corresponding edge in the tropical convex hull, under the tropical Hilbert metric \[
d(x,y) = \max_{i} (x_i-y_i) - \min_{i} (x_i-y_i).
\] The leaf edges do not have lengths (or are considered to have length $\infty$).  
Conversely, every bicolored metric tree uniquely determines a tropical rank $2$ matrix modulo tropically scaling rows and columns. For example, the following matrix corresponds to the bicolored tree of Figure~\ref{fig:non-symm-non-caterpillar}:
\begin{equation}
\label{eqn:rank2matrix}
    \begin{bmatrix}
    a & 0 & 0 \\
    0 & b & 0 \\
    0 & 0 & c
\end{bmatrix},\ \ a,b,c>0.
\end{equation}
Using the bicolored trees, we will describe the positive tropicalizations of rank two matrices.

\begin{figure}[h]
\begin{tikzpicture}[line cap=round,line join=round,x=1cm,y=1cm, scale=0.5]

\draw [line width=1pt,color=blue] (2,1)-- (3,2);
\draw [line width=1pt, color=red] (2,1) -- (3,0);
\draw [line width=1pt, color=blue] (-2,1) -- (-3,0);
\draw [line width=1pt, color=red] (-2,1) -- (-3,2);
\draw [line width=1pt, color=blue] (0,-2) -- (1,-3);
\draw [line width=1pt, color=red] (0,-2) -- (-1,-3);

\draw [line width=1pt] (0,0)-- (0,-2);
\draw [line width=1pt] (0,0)-- (-2,1);
\draw [line width=1pt] (0,0)-- (2,1);

\draw (-1.5, 0.1) node {$a$};
\draw (1, 1) node {$b$};
\draw (0.3,-1) node {$c$};

\draw[color=blue] (-3.3,0.1) node {$1$};
\draw[color=blue] (3.3,2.1) node {$2$};
\draw[color=blue] (1,-3.5) node {$3$};

\draw[color=red] (-3.3,2.1) node {$1$};
\draw[color=red] (3.3,0.1) node {$2$};
\draw[color=red] (-1,-3.5) node {$3$};

\draw (0, 0.6) node {$O$};
\end{tikzpicture}
\caption{The bicolored tree corresponding to the matrix~\eqref{eqn:rank2matrix}, which has tropical rank $2$ and symmetric tropical rank $3$. Any bicolored tree that is not a caterpillar must contain this as a subtree. The origin is marked as $O$, and $a, b, c$ are positive real numbers.}
\label{fig:non-symm-non-caterpillar}
\end{figure}
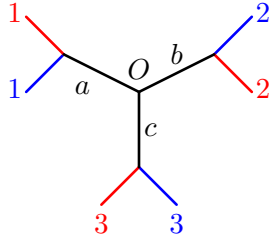


\begin{theorem}
\label{thm:nonsymmrk2}
For any integers $d,n \geq 2$, the positive and really positive tropicalization of $\cM^{2}_{d,n}$ coincide and are equal to the set of $d \times n$ matrices of Barvinok rank at most 2.
\end{theorem}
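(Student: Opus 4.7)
The proof plan is to sandwich the really positive tropicalization between the set of Barvinok rank $\leq 2$ tropical matrices and the positive tropicalization, using that these two coincide. Since $\cR^+ \subseteq \cC^+$ we have the automatic containment $\trop_{\cR^+}(\cM^2_{d,n}) \subseteq \trop_{\cC^+}(\cM^2_{d,n})$, and by the results of Ardila~\cite{ardila2004tropical} and Brandenburg, Loho, and Sinn~\cite{BLS} (see Remark~\ref{rem:gap}) the latter equals $\{A : \text{Barvinok rank}(A) \leq 2\}$. It therefore suffices to exhibit a really positive rank-$\leq 2$ lift for every Barvinok rank $\leq 2$ tropical matrix.

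For this constructive step, given $A \in \RR^{d\times n}$ with Barvinok factorization $A_{ij} = \min(B_{i1}+C_{1j},\, B_{i2}+C_{2j})$ for some $B \in \RR^{d\times 2}$ and $C \in \RR^{2\times n}$, I would define Puiseux series matrices $\tilde{B}_{ik} := t^{B_{ik}}$ and $\tilde{C}_{kj} := t^{C_{kj}}$ in $\cR^+$, and set $\tilde{A} := \tilde{B}\tilde{C}$. Then $\tilde{A}$ has rank at most $2$ over $\cR$, each entry
\[
\tilde{A}_{ij} = t^{B_{i1}+C_{1j}} + t^{B_{i2}+C_{2j}}
\]
is a positive real Puiseux series (the sum of two positive monomials cannot cancel leading terms), and
\[
\val(\tilde{A}_{ij}) = \min(B_{i1}+C_{1j},\, B_{i2}+C_{2j}) = A_{ij}.
\]
Hence $A = \val(\tilde{A}) \in \trop_{\cR^+}(\cM^2_{d,n})$, closing the sandwich.

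The main potential obstacle is not the constructive half above but rather the cited equality $\trop_{\cC^+}(\cM^2_{d,n}) = \{A : \text{Barvinok rank}(A) \leq 2\}$, and in particular the proviso flagged in Remark~\ref{rem:gap}. If one wished to reprove this from scratch, the task reduces to showing that any rank-$\leq 2$ lift $\tilde{A} \in (\cC^+)^{d \times n}$ admits a factorization $\tilde{A} = \tilde{B}\tilde{C}$ in which $\tilde{B}$ and $\tilde{C}$ themselves lie in $(\cC^+)^{d\times 2}$ and $(\cC^+)^{2 \times n}$; tropicalizing such a factorization yields the desired Barvinok decomposition of $\val(\tilde{A})$. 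This is the classical nonnegative rank-$2$ factorization problem, which admits an elementary solution by a two-dimensional convex geometry argument inside the column span of $\tilde{A}$.
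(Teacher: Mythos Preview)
Your constructive half---lifting a Barvinok factorization $A = B \odot C$ entrywise to $\tilde{B}_{ik} = t^{B_{ik}}$, $\tilde{C}_{kj} = t^{C_{kj}}$ and multiplying---is correct and is exactly what the paper does for the inclusion $\{\text{Barvinok rank} \leq 2\} \subseteq \tropR(\cM^2_{d,n})$.

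The gap is in the other containment, $\tropC(\cM^2_{d,n}) \subseteq \{\text{Barvinok rank} \leq 2\}$. You invoke Ardila and Brandenburg--Loho--Sinn for this, but Remark~\ref{rem:gap}, which you yourself cite, is precisely the observation that those proofs are incomplete; the whole point of the paper here is to give a self-contained argument, so you cannot close the sandwich by appealing to them. Your proposed repair---factoring a rank-$2$ lift $\tilde{A} \in (\cC^+)^{d\times n}$ as $\tilde{B}\tilde{C}$ with $\tilde{B},\tilde{C}$ in $\cC^+$---is the nonnegative-rank-$2$ statement over $\cC^+$, and the ``two-dimensional convex geometry argument'' you allude to does not transfer: $\cC$ is not an ordered field (positivity in $\cC^+$ constrains only the leading coefficient, with higher-order terms arbitrary complex), so there is no notion of extreme rays of a planar cone in the column span. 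Indeed, taking an arbitrary rank-$2$ factorization over $\cC$ and tropicalizing is exactly what fails in the BLS argument, because leading terms can cancel in the product.

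The paper avoids factorizations entirely for this direction. It uses the bijection of~\cite{MarkwigYu} between tropical rank-$2$ matrices (modulo lineality) and bicolored trees, together with the fact from~\cite{develin2005rank} that Barvinok rank $2$ corresponds to caterpillar trees. Any non-caterpillar tree contains, after relabeling and tropical scaling, the $3\times 3$ submatrix of~\eqref{eqn:rank2matrix}; the minimum in its tropical determinant is attained only at the two even-permutation monomials, which have the same sign, so no $\cC^+$-lift can make that minor vanish. This single $3\times 3$ obstruction shows directly that non-caterpillar matrices lie outside $\tropC(\cM^2_{d,n})$.
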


\begin{proof}
Let $B_{d,n}^2$ be the set of $d\times n$ tropical matrices of Barvinok rank at most $2$.
     We will show that 
    \[
    \tropC(\cM_{d,n}^2) \subseteq B_{d,n}^2 \subseteq \tropR(\cM_{d,n}^2).
    \]
    The reverse containment $\tropR(\cM_{d,n}^2) \subseteq \tropC(\cM_{d,n}^2)$ follows from the definition.

    We first show the inclusion on the right. If $A$ is a $d \times n$ matrix of Barvinok rank 2, then there exists matrices $B \in \RR^{d \times 2}$ and $C \in \RR^{2 \times n}$ satisfying $A = B \odot C$. There are no conditions on~$B$ and~$C$, so any lift of $B$ and  $C$ to $\tilde{B}$ and $ \tilde{C}$ respectively over $\cR^+$ (for instance, simply lifting~$n$ to~$t^n$) induces a matrix $\tilde{A} = \tilde{B}\tilde{C}$. Because matrix multiplication is subtraction-free, and $\tilde{B}$ and $\tilde{C}$ are positive real Puiseux series matrices, no cancellation occurs in the product $\tilde{B}\tilde{C}$ so $\tilde{A}$ has entries in~$\cR^+$ and $\trop(\tilde{A}) = A$.  The matrix $\tilde{A}$ has rank at most $2$ because it is a product of a $d\times 2$ matrix and a $2 \times n$ matrix.

    Now we show that any matrix in $\tropC(\cM_{d,n}^2)$ has Barvinok rank 2. From \cite{MarkwigYu}, we have a bijection between $\cM_{d,n}^2$ modulo lineality, which correpsond to tropically scaling of rows and columns of the matrices, and bicolored metric trees with $d$ leaves of one color and $n$ leaves of the other color. Furthermore, it follows from~\cite[Proposition~6.1]{develin2005rank} that a tropical rank $2$ matrix has Barvinok rank 2 if and only if its bicolored tree is a ``caterpillar'' tree: all of its internal vertices lie along a path. We will now show that any non-caterpillar phylogenetic tree cannot be in $\tropC(M_{d,n}^2)$. 

    If a tree is not caterpillar, then its internal vertices do not all lie along a path. Up to permutation of rows and columns (corresponding to simultaneous permutation of the leaf labels within each color class), it contains Figure \ref{fig:non-symm-non-caterpillar} as a subtree.
By tropically rescaling the rows and columns of the corresponding matrix, which preserves all notions of rank, we obtain the submatrix $$\begin{bmatrix}
    a & 0 & 0\\0 & b & 0\\0 & 0 & c
\end{bmatrix},~~~ a,b,c > 0$$ inside our matrix. The tropical determinant of this submatrix achieves the minimum among the two all-zero monomials. Both of these monomials have positive sign in the $3 \times 3$ determinant polynomial, so plugging in elements of $\cC^+$ does not make the determinant vanish.
Thus any matrix containing this submatrix cannot lie in $\tropC(\cM_{d,n}^2)$, as desired.
\end{proof}

Caterpillar trees also arise as Stiefel tropical lines in~\cite{FinkRincon}.  A matrix of Barvinok rank $2$ gives rise to a Stiefel tropical line as follows.  We can factor a $d \times n$ matrix $A$ of Barvinok rank $2$ as $A = B \odot C$ where $B$ is $d \times 2$ and $C$ is $2 \times n$.  As in the proof above, if we lift $B$ and $C$ to any matrices $\tilde{B}, \tilde{C}$ over $\cR^+$, then there is no cancellation of leading terms in the matrix product $\tilde{B}\tilde{C}$.  So the columns of $A$ lie in the tropicalization of the column space of $\tilde{B}$. Entries of $\tilde{B}$ can be chosen generically enough so that there is no cancellation of leading terms in their $2 \times 2$ minors, then the tropicalization of the column space of $\tilde{B}$ is the Stiefel tropical linear space of the matrix $B$.  In this case the bicolored tree associated to $A$ coincides with the caterpillar tree arising from the Stiefel tropical line, with some additional leaves added to mark the locations of columns of $A$.

\begin{remark}
\label{rem:gap}
The result in Theorem~\ref{thm:nonsymmrk2} has appeared in the literature \cite{ardila2004tropical,BLS}. 
    However the proof in \cite{ardila2004tropical}, which was made more explicit in \cite[Theorem 5.2]{BLS}, relies on \cite[Theorem~2]{PachterSturmfels} for which there is a gap in the proof.  The image of the tropical morphism $g$ in the proof coincides with $\tropR$ of the image variety, not $\tropC$ as claimed in the theorem statement.  Note that $\mathcal{T}^+$ in \cite{PachterSturmfels} refers to our $\tropC$ and the set $\mathcal{R}^+$
 in \cite{SpeyerWilliams} refers to our $\mathcal{C}^+$.

The result is also implied by Corollary~2.14 in \cite{BLS}, but there is a gap in the proof of  Proposition~2.13  that the corollary relies upon, as the proof does not take into account possible cancellation of leading terms.  See the corrigendum~\cite{BLS_corrigendum}.
We do not know of counterexamples to \cite[Theorem~2]{PachterSturmfels} or \cite[Proposition~2.13]{BLS}; we only know that the proofs are incomplete.
\qed
\end{remark}

\section{Symmetric Rank 2}

Let $\Snr \subset \cC^{{n+1 \choose 2}}$ be the variety of $n\times n$ symmetric matrices of rank at most $r$.  The \textit{symmetric Kapranov rank} over $\cK$ of an $n \times n$ symmetric tropical matrix $A$ is defined to be the smallest $r$ such that $A$ lies in $\trop_\cK(\Snr)$, for $\cK = \cC$ or $\cR$.

A symmetric tropical matrix $A$ is \textit{tropically symmetrically singular} if the minimum in $\val(\det(A))= \oplus_{\sigma\in S_n} \left( a_{1\sigma(1)}\odot \cdots \odot a_{n\sigma(n)}\right)  $  is attained at two distinct tropical monomials. 
For example, the tropical determinant of the $3\times 3$ matrix {\tiny $\begin{pmatrix}
    a_{11} & a_{12} & a_{13} \\ a_{12} & a_{22} & a_{23} \\ a_{13} & a_{23} & a_{33}
\end{pmatrix}$} is
\((a_{11}\odot a_{22} \odot a_{33}) \oplus (a_{12} \odot a_{13} \odot a_{23}) \oplus (a_{12} \odot a_{12} \odot a_{33}) \oplus (a_{13} \odot a_{13} \odot a_{22}) \oplus (a_{23} \odot a_{23} \odot a_{11}). \)
Then the classical identity matrix {\tiny$\begin{pmatrix}
    1 & 0 & 0 \\ 0 & 1 & 0 \\ 0 & 0 & 1
\end{pmatrix}$} is tropically singular but tropically symmetrically nonsingular since the minimum in the symmetric determinant is attained uniquely at the tropical monomial $a_{12}\odot a_{13} \odot a_{23}$.  

The \textit{symmetric tropical rank} of a symmetric matrix is the size of the largest tropically nonsingular submatrix of $A$, where we use the notion of the symmetric tropical singularity for the principal submatrices. Equivalently, a symmetric matrix has symmetric tropical rank $\leq r$ if minimum is attained twice in all its $(r+1)\times(r+1)$ tropical minors, where we use the tropicalization of the symmetric determinant for the principal minors.  

The \textit{symmetric Barvinok rank} of an $n\times n$
symmetric matrix $A$ is the smallest integer $r$ such that $A$ can be factored as $A = B \odot B^\top$ where $B$ is an $n \times r$ matrix.  
We have
\[
\text{symmetric tropical rank} \leq \text{symmetric Kapranov rank} \leq \text{ symmetric Barvinok rank}
\]
where the symmetric Kapranov rank is over any valued field, and the inequalities can be strict.  However if a symmetric tropical matrix has symmetric tropical rank $2$, then it has symmetric Kapranov rank $2$ over~$\cC$~\cite[Theorem 6]{zwick2021symmetric} .
We will show that such a matrix has symmetric Kapranov rank $2$ over $\cR$ as well.  We need the following lemma.

\begin{lemma}\label{lem:factoring-discriminant}
    Let $M=(m_{ij})_{1\leq i,j \leq n}$ be an  $n \times n$  symmetric matrix of indeterminates. Fix $i,j \in [n]$ and consider the determinant of $M$ as a quadratic polynomial in $m_{ij}$.
    Then the discriminant of $\det(M)$ with respect to $m_{ij}$ is given by
    \[{\rm Disc}_{m_{ij}}(\det(M))) = 4 M_i M_j\]
    where $M_i$ is the determinant of the submatrix of $M$ obtained by deleting the $i^{\rm th}$ row and the $i^{\rm th}$ column. 
\end{lemma}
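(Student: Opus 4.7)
The plan is to reduce to the case $(i,j) = (n-1, n)$ by simultaneously permuting rows and columns (i.e., replacing $M$ by $PMP^\top$ for a permutation matrix $P$), which preserves symmetry and the determinant and merely relabels the principal minors $M_i$, $M_j$. After this reduction, I will write $M$ in $2\times 2$ block form and use the Schur complement to make the dependence of $\det(M)$ on $x := m_{ij}$ fully explicit.

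Concretely, write
\[
M = \begin{pmatrix} A & B \\ B^\top & D \end{pmatrix}, \qquad D = \begin{pmatrix} m_{ii} & x \\ x & m_{jj} \end{pmatrix},
\]
where $A$ is the $(n-2) \times (n-2)$ principal submatrix obtained by removing the last two rows and columns, and $B$ is the $(n-2) \times 2$ block of remaining entries. Treating the $m_{kl}$ as indeterminates, so that $A$ is invertible over the field of fractions, the Schur complement formula gives $\det(M) = \det(A)\det(D - B^\top A^{-1} B)$. The matrix $S := B^\top A^{-1} B$ is symmetric (since $A^{-1}$ is) and independent of $x$; writing $S = \left(\begin{smallmatrix} \alpha & \beta \\ \beta & \gamma \end{smallmatrix}\right)$ and expanding the $2 \times 2$ determinant, I obtain
\[
\det(M) = -\det(A)\,(x-\beta)^2 + \det(A)\,(m_{ii}-\alpha)(m_{jj}-\gamma).
\]

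Reading off $a = -\det(A)$, $b = 2\det(A)\beta$, and $c = \det(A)\bigl[(m_{ii}-\alpha)(m_{jj}-\gamma) - \beta^2\bigr]$, a direct computation gives $b^2 - 4ac = 4\det(A)^2(m_{ii}-\alpha)(m_{jj}-\gamma)$. A second application of the Schur complement, this time to the $(n-1) \times (n-1)$ principal submatrices whose determinants are $M_j$ and $M_i$ by definition, identifies $M_j = \det(A)(m_{ii}-\alpha)$ and $M_i = \det(A)(m_{jj}-\gamma)$, so the discriminant is exactly $4\, M_i M_j$.

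The only subtlety is that the Schur complement requires $\det(A) \neq 0$, which fails for general specializations. This is resolved by working with the $m_{kl}$ as formal indeterminates over $\ZZ$: then $\det(A)$ is a nonzero polynomial, hence invertible in the field of fractions, and both sides of the claimed identity are polynomials in the entries of $M$, so the equality holds universally by specialization.
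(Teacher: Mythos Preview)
Your argument is correct. The reduction by simultaneous row/column permutation is harmless, the Schur complement computation is clean, and working in the fraction field of $\ZZ[m_{kl}]$ to guarantee $\det(A)\neq 0$ and then clearing denominators is the right way to handle the genericity issue.

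The paper's proof takes a different but closely related route: it reads off the coefficients $A$, $B$, $C$ of the quadratic directly as minors ($A$ the $(n-2)\times(n-2)$ principal minor, $B$ twice the off-diagonal $(i,j)$-minor at $x=0$, $C=\det(M)|_{x=0}$) and then invokes the Desnanot--Jacobi (Dodgson condensation) identity
\[
\det(M)\cdot M_{\{i,j\},\{i,j\}} \;=\; M_i\,M_j \;-\; M_{\{i\},\{j\}}\,M_{\{j\},\{i\}}
\]
evaluated at $x=0$ to collapse $B^2-4AC$ into $4M_iM_j$. Your Schur-complement computation is in effect a proof of that identity specialized to this situation: the quantities $\det(A)(m_{ii}-\alpha)$, $\det(A)(m_{jj}-\gamma)$, $\det(A)\beta$ are exactly $M_j$, $M_i$, and the off-diagonal minor, and the factorization $\det(M)=-\det(A)(x-\beta)^2+\det(A)(m_{ii}-\alpha)(m_{jj}-\gamma)$ is Dodgson in disguise. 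What your approach buys is that the quadratic is visibly a completed square plus a constant, so the discriminant drops out with no appeal to a named identity; what the paper's approach buys is brevity once one is willing to cite Dodgson.
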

\begin{proof}
    Without loss of generality, let $i = 1$ and $j=2$, and assume that $m_{12} = x$. Denote by $M_{S,T}$ the determinant of the submatrix of $M$ obtained by deleting the rows indexed by $S$ and columns indexed by $T$. Let $P(x) = \det(M) = A x^2 + B x + C$. $Ax^2$ consists of all terms that include $m_{12}$ twice, $Bx$ of all terms that include $m_{12}$ once, and $C$ all the terms of the determinant that avoid $m_{12}$. Thus, $A=M_{\{1,2\},\{1,2\}}$, $B=2 (M_{\{2\},\{1\}})|_{x=0}$, and  $C = P(0)$. Thus 
    \[
    {\rm Disc}_{12}(P) = B^2 - 4 A C = 4 (M_{\{2\},\{1\}}|_{x=0})^2 - 4 M_{\{1,2\},\{1,2\}} P(0) = 4( M_1 M_2)|_{x=0}
    \]
    where the last equation is the Dodgson condensation \cite{Dodgson}. Since $M_1$ and $M_2$ are independent of $x$, then the result follows.
\end{proof}

The following lemma and theorem were proved in \cite{zwick2021symmetric} over $\cC$. We now extend them to $\cR$.
\begin{lemma}\label{lem:SubmatricesLifts}
    Let $A$ be an $n\times n$ tropical symmetric matrix of symmetric tropical rank two of the form
    \[
    A = \begin{pmatrix}
        B & \bf{0} & \bf{0}\\
        \bf{0} & 0 & \bf{0} \\
        \bf{0} & \bf{0} & C
    \end{pmatrix}
    \]
    where $B$ is symmetric with positive diagonal entries and $C$ is symmetric with nonnegative entries and having no zero rows or columns. If $\tilde{B}$ and $\tilde{C}$ are symmetric rank $2$ lifts of the submatrices
    \[
    \begin{pmatrix}
        B & \bf{0} \\
        \bf{0} & 0
    \end{pmatrix} \ \ \ \text{ and } \ \ \ \begin{pmatrix}
        0 & \bf{0} \\
        \bf{0} & C
    \end{pmatrix}
    \]
     over $\mathcal{C}$, then $A$ has a symmetric rank $2$ lift $\tilde{A}$ over $\mathcal{C}$. Moreover, if $\tilde{B}$ and $\tilde{C}$ are over $\mathcal{R}$ and $\tilde{C}$ contains a negative $2\times 2$ principal minor, then $A$ has a symmetric rank $2$ lift over $\mathcal{R}$.  
\end{lemma}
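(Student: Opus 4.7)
The plan is to construct $\tilde A$ by gluing outer-product rank-two factorizations of $\tilde B$ and $\tilde C$ along their shared middle position. I would write $\tilde B = u v^\top + v u^\top$ for some $u,v \in \cK^{k_1+1}$ and $\tilde C = p q^\top + q p^\top$ for some $p,q \in \cK^{k_2+1}$, where $\cK$ stands for either $\cC$ or $\cR$. Over $\cC$ every symmetric rank-two matrix admits such a factorization; over $\cR$ it exists if and only if the matrix has signature $(1,1)$. For $\tilde C$ this signature is the hypothesis of a negative $2\times 2$ principal minor. For $\tilde B$ it is automatic: for any $i \le k_1$ the principal minor $\tilde B_{ii}\tilde B_{k_1+1,k_1+1} - \tilde B_{i,k_1+1}^2$ has $\val(\tilde B_{ii}\tilde B_{k_1+1,k_1+1}) = B_{ii} > 0$ while $\val(\tilde B_{i,k_1+1}^2) = 0$, so the minor has leading coefficient $-(\tilde B_{i,k_1+1}|_{t=0})^2$, a negative real number, forcing $\tilde B$ to be indefinite.

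Using the $\cK^\times$-freedom $(u,v) \mapsto (\lambda u, \lambda^{-1}v)$ in such factorizations, I would normalize so that the four middle-position components $u_{k_1+1}, v_{k_1+1}, p_1, q_1$ all have valuation $0$. Then define $\hat u, \hat v \in \cK^n$ by concatenating: the first $k_1+1$ entries of $\hat u$ are $u_1,\ldots,u_{k_1+1}$, and the last $k_2$ entries are $(u_{k_1+1}/p_1)\cdot(p_2,\ldots,p_{k_2+1})$; symmetrically for $\hat v$ in terms of $v$ and $q$. Setting $\tilde A = \hat u \hat v^\top + \hat v \hat u^\top$ produces a symmetric matrix of rank at most two whose top-left $(k_1+1)\times(k_1+1)$ block equals $\tilde B$ (hence correctly lifts the $B$-extended block) and whose bottom-right $(k_2+1)\times(k_2+1)$ block equals the val-$0$ scalar $u_{k_1+1}v_{k_1+1}/(p_1 q_1)$ times $\tilde C$ (hence lifts the $C$-extended block).

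The main step is verifying $\val(\tilde A_{ij}) = 0$ for the trans-block entries
\[
\tilde A_{ij} = \frac{v_{k_1+1}}{q_1}\, u_i\, q_{j-k_1} + \frac{u_{k_1+1}}{p_1}\, v_i\, p_{j-k_1}, \qquad i\le k_1,\ j\ge k_1+2.
\]
I would argue that the symmetric tropical rank-two hypothesis on $A$, applied to $3\times 3$ principal minors that mix one $B$-index, the middle index, and one $C$-index, restricts the zero pattern of $B$ and $C$ and the valuation ``types'' of the entries $u_i, v_i, p_\ell, q_\ell$ sufficiently that the remaining $u \leftrightarrow v$ and $p \leftrightarrow q$ swap freedom in the factorizations can be used to align the type of every $B$-row with the complementary type of every $C$-column. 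This ensures that in each $(i,j)$ entry at least one of the two summands above has valuation $0$ with a nonzero leading coefficient.

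The hardest part, and what distinguishes the real case from the complex one, is the first step: producing real outer-product factorizations of $\tilde B$ and $\tilde C$. The hypothesis that $\tilde C$ has a negative $2\times 2$ principal minor is exactly what enforces signature $(1,1)$ over $\cR$ and hence the existence of a real factorization, while for $\tilde B$ the signature $(1,1)$ is automatic given the positive diagonal of $B$.
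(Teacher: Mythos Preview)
Your overall strategy is genuinely different from the paper's and can be made to work, but the ``main step'' is only asserted, not argued. The factorization $uv^\top+vu^\top$ of a rank-two symmetric matrix is unique up to a global swap $u\leftrightarrow v$ and scaling, so once $\tilde B,\tilde C$ are fixed you have exactly four choices. Each $B$-row $i$ then carries a fixed ``type'' (whichever of $\val u_i,\val v_i$ is zero; they cannot both be, since $\val(u_iv_i)=B_{ii}>0$), and similarly for $C$-columns. Your trans-block entry has valuation $0$ only when the type of row $i$ is complementary to that of column $\ell$, and a single global swap on each side cannot in general repair mixed types. What actually rescues the argument is a case analysis you have not supplied: if two $B$-rows $i_1,i_2$ have opposite types one gets $B_{i_1i_2}=0$, and then the symmetric tropical rank-two condition on the principal $3\times 3$ minor on $\{i_1,i_2,j\}$ forces $C_{\ell\ell}=0$ for every $C$-index, whence $\val p_\ell=\val q_\ell=0$ and every $C$-column is compatible with every $B$-row; a symmetric argument handles mixed $C$-types. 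One also needs that the two summands in $\tilde A_{ij}$ never both have valuation $0$ (their valuations sum to $B_{ii}+C_{\ell\ell}>0$), ruling out leading-term cancellation. None of this is in your write-up.

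The paper does something quite different. It defers the $\cC$ case to Zwick, and for $\cR$ it isolates the single $3\times 3$ principal block $A[m,m+1,m+2]$ straddling the boundary, lifts all entries except one off-diagonal $x$, and solves the quadratic $\det\tilde M(x)=0$. By Lemma~\ref{lem:factoring-discriminant} the discriminant factors as $4\det\tilde M[1,2]\cdot\det\tilde M[2,3]$; the second factor is negative by the hypothesis on $\tilde C$, and the first is negative because $\val(\tilde a_{m,m})>0$ forces $\tilde a_{m,m}-\tilde a_{m,m+1}^2<0$. Hence $x\in\cR$ with valuation $0$, and the remaining entries are completed by linear combinations as in Zwick. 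The negative-minor hypothesis thus plays different roles in the two approaches: for you it guarantees indefinite signature and hence a real outer-product factorization of $\tilde C$; for the paper it directly supplies one of the two negative principal minors in the discriminant.
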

\begin{proof}The existence of $\tilde{A}$ over $\cC$ was shown in~\cite{zwick2021symmetric}.  It remains to prove the last statement.  Suppose the symmetric rank $2$ lifts $\tilde{B}, \tilde{C}$ are over~$\cR$, and by permuting rows and columns of $C$ and $ \tilde{C}$ if necessary, we may assume that the top left $2 \times 2$ principal minor of $\tilde{C}$ is negative.

    By scaling the rows and columns we may assume that the bottom right entry of $\tilde{B}$ and the top left entry of $\tilde{C}$ are equal to one.
    Let $m$ and $r$ denote the dimensions of $B$ and $C$ respectively. Thus $n = m+r+1$. Consider the principal submatrix $M = A[m,m+1,m+2]$ of $A$ where $A[S]$ denote the submatrix of $A$ indexed on the rows and columns by $S$. Thus
    \[ M=
     \begin{pmatrix}
     a_{m , m} & 0 & 0 \\ 0 & 0 & 0 \\ 0 & 0 & a_{m+2 , m+2}
    \end{pmatrix}
    \]
    with $a_{m,m}$ is the bottom right entry of $B$ and $a_{m+2,m+2}$ is the top left entry of $C$. Using the lifts $\tilde{B}$ and $\tilde{C}$ we let
    \[
    \tilde{M} = 
     \begin{pmatrix}
     \tilde{a}_{m , m} & \tilde{a}_{m , m+1} & x \\ \tilde{a}_{m , m+1} & 1 & \tilde{a}_{m+1 , m+2} \\ x & \tilde{a}_{m+1 , m+2} & \tilde{a}_{m+2 , m+2}
    \end{pmatrix}
    \]
    where $x$ is an indeterminate.
    We want this matrix to be symmetric and singular.  Setting its determinant to zero gives a quadratic equation $x^2 + bx + c = 0$. The term $bx$ contains $2\tilde{a}_{m, m+1}\tilde{a}_{m+1, m+2}x$ as the element with smallest valuation, so $b$ has valuation zero. The term $c$ has valuation as least zero, since $a_{m,m}>0$ and $a_{m+2,m+2} \ge 0$.
   The product of the two roots is $c$, and the sum of the two roots is $b$, so one of the roots must have valuation zero and the other has nonnegative valuation. There is no cancellation of the leading terms when we add the roots; otherwise both roots would have negative valuations, but their product has nonnegative valuation. 
    
    Let $x$ be a root with valuation zero. For $x$ to be in $\mathcal{R}$, we want the discriminant of this quadratic equation to be nonnegative. If the discriminant is zero, then $x\in \mathcal{R}$. Suppose the discriminant is nonzero. Using Lemma~\ref{lem:factoring-discriminant}, we want the determinants of the $2\times 2$ submatrices $\tilde{M}[1,2]$ and $\tilde{M}[2,3]$ to have the same sign. 
    The submatrix $\tilde{M}[2,3]$ is the top left principal $2 \times 2$ submatrix of $\tilde{C}$, so by our assumption  $\det \tilde{M}[2,3]\leq 0$. And we have $\det \tilde{M}[1,2]= \tilde{a}_{m,m}-\tilde{a}_{m,m+1}^2$. Since the valuation of $\tilde{a}_{m,m}$ is positive and the valuation of $\tilde{a}_{m,m+1}$ is zero, so the leading term of $\det \tilde{M}[1,2]<0$ comes from $-\tilde{a}_{m,m+1}^2$ and is negative. This implies that the discriminant is positive. Thus, we get a symmetric rank $2$ lift $\tilde{M}$ of $M$ over $\mathcal{R}$. 
    
    The remaining entries of the matrix can then be completed using linear combinations of existing rows and columns, as in the proof of Lemma~3 in \cite{zwick2021symmetric}.

\end{proof}
\begin{theorem}
\label{thm:symmrank2real}
    Let $A$ be an $n\times n$ tropical symmetric matrix. Then $A$ has symmetric Kapranov rank two over $\mathcal{R}$ if and only if it has symmetric tropical rank two.  
\end{theorem}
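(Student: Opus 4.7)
The ``only if'' direction is immediate from the general inequality symmetric tropical rank $\leq$ symmetric Kapranov rank, which holds over any valued field \cite{zwick2021symmetric}. The converse is the substantive direction; the plan is to imitate Zwick's inductive proof over $\cC$ from \cite{zwick2021symmetric}, replacing its lifting step by Lemma~\ref{lem:SubmatricesLifts} so that all lifts are produced over $\cR$.

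The plan is to proceed by induction on $n$, checking base cases $n \leq 3$ directly. For the inductive step, use symmetric tropical scalings of rows and columns together with simultaneous row/column permutations to place $A$ in the block form
\[
A = \begin{pmatrix} B & \mathbf{0} & \mathbf{0} \\ \mathbf{0} & 0 & \mathbf{0} \\ \mathbf{0} & \mathbf{0} & C \end{pmatrix}
\]
required by Lemma~\ref{lem:SubmatricesLifts}. This normalization is the same combinatorial reduction as in Zwick's proof, and its validity is driven by the symmetric bicolored tree structure of the space of symmetric tropical rank 2 matrices \cite{cai2024symmetric}. By the inductive hypothesis, the two principal subblocks $\begin{pmatrix} B & \mathbf{0} \\ \mathbf{0} & 0\end{pmatrix}$ and $\begin{pmatrix} 0 & \mathbf{0} \\ \mathbf{0} & C\end{pmatrix}$ admit symmetric rank 2 lifts $\tilde{B}$ and $\tilde{C}$ over $\cR$.

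The main obstacle is the extra hypothesis required by Lemma~\ref{lem:SubmatricesLifts}: the lift $\tilde{C}$ (or $\tilde{B}$, after swapping blocks via a symmetric permutation of $A$) must have a negative $2 \times 2$ principal minor. A real symmetric rank 2 matrix has one of three signatures, $(2,0)$, $(1,1)$, or $(0,2)$, and signature $(1,1)$ always produces a negative principal $2 \times 2$ minor (indeed, for such a matrix the sum of all principal $2 \times 2$ minors equals $\lambda_+\lambda_- < 0$, so at least one summand is negative). The plan is therefore to strengthen the inductive hypothesis so as to output an indefinite rank 2 lift for at least one of the two subblocks. A lift of signature $(1,1)$ can be built as a factorization $u u^\top - v v^\top$ over $\cR$, and the valuations of $u$ and $v$ can be arranged to control the leading terms of each entry of the product; the extra sign freedom available over $\cR$ (compared with the purely sum-of-squares form $uu^\top + vv^\top$ used in the $\cC$ argument) is what powers this upgrade.

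The technical crux, and the step I expect to be hardest, is verifying this signature flexibility uniformly across every combinatorial type of symmetric bicolored tree that appears in the induction, and showing that the few degenerate configurations in which no indefinite lift exists already reduce to the base cases handled by direct construction. Once that case analysis is in place, the conclusion follows by assembling the inductive lifts via Lemma~\ref{lem:SubmatricesLifts}, yielding a symmetric rank 2 lift of $A$ over $\cR$.
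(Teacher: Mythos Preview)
Your overall strategy matches the paper's: adapt Zwick's proof over $\cC$ and replace the gluing step with Lemma~\ref{lem:SubmatricesLifts}, then deal with the extra hypothesis that $\tilde{C}$ has a negative $2\times 2$ principal minor. Where you diverge is in how you secure that negative minor.

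You propose strengthening the inductive hypothesis so as to always produce an indefinite (signature $(1,1)$) rank~$2$ lift, and you correctly flag the resulting case analysis as the hard part. The paper sidesteps this entirely. It does not run an induction on $n$ with a strengthened hypothesis; instead it follows Zwick's structural reduction to the five-block form
\[
\begin{pmatrix}
B_1 & \mathbf{0} & \mathbf{0} & \mathbf{0} & \mathbf{0} \\
\mathbf{0} & B_2 & \mathbf{0} & \mathbf{0} & \mathbf{0} \\
\mathbf{0} & \mathbf{0} & \mathbf{0} & \mathbf{0} & \mathbf{0} \\
\mathbf{0} & \mathbf{0} & \mathbf{0} & \mathbf{0} & C\\
\mathbf{0} & \mathbf{0} & \mathbf{0} & C^\top & \mathbf{0}
\end{pmatrix}
\]
and lifts the two overlapping pieces $A_1$ and $A_2$ using Zwick's Lemmas~1 and~2, which are \emph{direct} constructions rather than inductive ones. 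The key observation is a one-liner: Zwick's Lemma~1 builds the lift of $A_2$ by choosing generic entries, and demanding that one $2\times 2$ principal minor be negative is an open condition that can simply be absorbed into that genericity. No strengthened hypothesis and no case analysis across tree types are needed. Your approach could be made to work, but it is considerably more labor than what the paper actually does.

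Your outline also has two smaller gaps. First, the block form you write down is the three-block shape required as \emph{input} to Lemma~\ref{lem:SubmatricesLifts}, not Zwick's normal form; getting from an arbitrary symmetric tropical rank~$2$ matrix to one with a designated zero row/column is exactly what Zwick's reduction does, and you should invoke it rather than assert it. Second, you do not handle matrices that have no zero row/column after scaling; the paper (following Zwick's Lemma~5) treats this by adjoining a zero row and column, lifting the enlarged matrix, and then deleting the extra row and column from the lift.
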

\begin{proof}
    We will follow the proof of Theorem $6$ in\cite{zwick2021symmetric}. While Zwick proved the theorem over $\mathcal{C}$, most parts of the proof also work over $\mathcal{R}$, except for some details that we will identify and adjust to make the proof valid over $\mathcal{R}$.
    
    The forward direction $(\Rightarrow)$ follows from the facts that the symmetric tropical rank is at most equal to the symmetric Kapranov rank and that if $A$ has symmetric tropical rank one, then we can find a symmetric lift of $A$ over $\mathcal{R}$ with standard rank one.
    
    For the other direction $(\Leftarrow)$, the main idea of the proof is that any symmetric matrix $A$ with symmetric tropical rank two can be written in the following form, after possibly a tropical scaling and diagonal permutation:
    \[
    \begin{pmatrix}
        B_1 & \bf{0} & \bf{0} & \bf{0} & \bf{0} \\
        \bf{0} & B_2 & \bf{0} & \bf{0} & \bf{0} \\
        \bf{0} & \bf{0} & \bf{0} & \bf{0} & \bf{0} \\
        \bf{0} & \bf{0} & \bf{0} & \bf{0} & C\\
        \bf{0} & \bf{0} & \bf{0} & C^\top & \bf{0}
    \end{pmatrix}
    \]
    the matrices $B_1$ and $B_2$ are symmetric and positive, and the matrix $C$ is non-negative and has no zero columns. The matrix $C^\top$ represents the transpose of $C$ and each $\bf{0}$ represents a zero matrix of the appropriate size.   The matrix $A$ does not need to contain any zero rows or columns, and some of the blocks $B_1$, $B_2$, and $C$ may be empty.  But we may assume that $A$ does not contain only one positive block and that it is not the zero matrix, \cite[Lemma~4]{zwick2021symmetric}. 
    
    We first assume that $A$ contains exactly one zero row/column. Using Lemmas $1$ and $2$ in \cite{zwick2021symmetric}, we construct symmetric rank $2$ lifts  to $\cR$ for the two submatrices:
    \[ A_1 =
    \begin{pmatrix}
        B_1 & \bf{0} & \bf{0}\\ \bf{0} & B_2 & \bf{0}\\ \bf{0} & \bf{0} & 0
    \end{pmatrix}
    \ \ \text{ and } \ \ A_2 =\begin{pmatrix}
        0 & \bf{0} & \bf{0} \\ \bf{0} & \bf{0} & C \\ \bf{0} & C^\top & \bf{0}
    \end{pmatrix}
    \]
    denoted by $\tilde{A_1}$ and $\tilde{A_2}$, respectively. We can adjust the proof of Lemma $1$ in \cite{zwick2021symmetric} to ensure that $\tilde{A_2}$ contains a $2\times 2$ principal submatrix with negative determinant (the construction of the lift is done by choosing generic entries, we require one of these generic choices satisfies this inequality, so it is still generic). Now using Lemma~\ref{lem:SubmatricesLifts}, we conclude that $A$ has the desired lift $\tilde{A}$. 

    If $A$ contains more than one zero row/columns, then we proceed by induction, where the base case is the covered by the above discussion. Finally, if $A$ contains no zero row/column, then the matrix $\begin{pmatrix}
        0 & \bf{0} \\ \bf{0} & A
    \end{pmatrix}$ has symmetric tropical rank two by \cite[Lemma~5]{zwick2021symmetric}, and therefore symmetric Kapranov rank two by the previous discussion. By eliminating
    the first row/column from the lift we get a symmetric rank $2$ lift of $A$ over $\mathcal{R}$. Therefore $A$ has symmetric Kapranov rank two as well.
\end{proof}

The theorem above, combined with \cite[Theorem~6]{zwick2021symmetric}, which says that symmetric tropical rank~$2$ matrices have symmetric Kapranov rank~$2$ over $\cC$, gives us the following.
\begin{corollary}
     The real and complex tropicalizations coincide for symmetric matrices of rank at most $2$. That is,
    \[
    \trop_\cR(\cS^2_n) = \trop_\cC(\cS^2_n).
    \]
\end{corollary}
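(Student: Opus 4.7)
The plan is to deduce this immediately from Theorem~\ref{thm:symmrank2real} together with Zwick's theorem \cite[Theorem 6]{zwick2021symmetric}. Both theorems identify the set of symmetric tropical matrices of symmetric Kapranov rank at most $2$ (over $\cR$ and over $\cC$ respectively) with the set of symmetric tropical matrices of symmetric tropical rank at most $2$. Since symmetric tropical rank is a purely combinatorial notion, independent of the base field, chaining the two biconditionals gives the desired equality.

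Concretely, the inclusion $\trop_\cR(\cS^2_n) \subseteq \trop_\cC(\cS^2_n)$ holds tautologically from $\cR \subset \cC$: any symmetric rank $\leq 2$ lift over $\cR$ is also a lift over $\cC$. For the reverse inclusion, I would take $A \in \trop_\cC(\cS^2_n)$. By definition of symmetric Kapranov rank, $A$ has symmetric Kapranov rank at most $2$ over $\cC$, hence by \cite[Theorem 6]{zwick2021symmetric} it has symmetric tropical rank at most $2$. Then by Theorem~\ref{thm:symmrank2real}, $A$ has symmetric Kapranov rank at most $2$ over $\cR$, so $A \in \trop_\cR(\cS^2_n)$.

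There is no real obstacle, as the heavy lifting has already been done in the preceding theorem. The one minor bookkeeping remark is that both biconditionals are phrased for rank exactly $2$, while the corollary concerns rank $\leq 2$. This extension is immediate: any symmetric tropical matrix of symmetric tropical rank $\leq 1$ can be written as $B \odot B^\top$ for some column vector $B \in \RR^n$, and the lift $\tilde{B}_i = t^{B_i}$ produces a rank $\leq 1$ symmetric matrix $\tilde{B}\tilde{B}^\top$ over $\cR^+$ with the required valuations (no cancellation occurs since all terms are positive).
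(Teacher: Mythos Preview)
Your proposal is correct and matches the paper's own argument essentially verbatim: the corollary is deduced by combining Theorem~\ref{thm:symmrank2real} with \cite[Theorem~6]{zwick2021symmetric}, both of which identify symmetric Kapranov rank~$2$ (over $\cR$ and $\cC$ respectively) with symmetric tropical rank~$2$. Your additional bookkeeping about the rank~$\leq 1$ case is already folded into the forward direction of Theorem~\ref{thm:symmrank2real} in the paper, but it does no harm to make it explicit.
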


Now let us consider the positive and really positive parts of $\mathcal{S}_n^2$. As in the nonsymmetric case, we will use the notion of bicolored trees to classify the possible cases. Inspired by the case of nonsymmetric rank 2 matrices, Cai, Lee, and Yu \cite{cai2024symmetric} proved that the polyhedral fan $\cS^2_n$ has a simplicial fan structure given by a special type of bicolored trees called symmetric bicolored trees, or a \textit{symbic trees}. The symbic trees are bicolored trees on $n$ blue leaves and $n$ red leaves which are invariant under swapping the colors, with the additional assumption that color-swapping the leaves induces an automorphism on the whole tree whose fixed points form a path (possibly a single point).

For example, the bicolored tree in Figure~\ref{fig:non-symm-non-caterpillar} is not symbic as its fixed points do not form a path, although it is symmetric under swapping colors.  
On the other hand, the matrix
\[
A = \begin{pmatrix}
    0 & a & b \\
    a & 0 & 0\\ 
    b & 0 & 0
\end{pmatrix},
\ \ \ a,b>0
\]
has symmetric rank $2$, and its  symbic tree is shown in Figure~\ref{fig:caterpillar-symbic-tree-3}(a). See \cite{cai2024symmetric} for more details. 

\begin{theorem}\label{thm:symmrk2}
    The positive and really positive parts of the tropical variety $\cS^2_n$ coincide, and are equal to the set of $n \times n$ symmetric matrices of Barvinok rank at most 2.
\end{theorem}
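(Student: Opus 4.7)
The plan is to establish the chain of containments
\[
\{\text{symmetric Barvinok rank} \le 2\} \,\subseteq\, \trop_{\cR^+}(\cS^2_n) \,\subseteq\, \trop_{\cC^+}(\cS^2_n) \,\subseteq\, \{\text{symmetric Barvinok rank} \le 2\}.
\]
The middle containment is immediate from $\cR^+ \subseteq \cC^+$.

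For the leftmost containment, I would mimic the symmetric analog of the lifting argument in Theorem~\ref{thm:nonsymmrk2}. Given a factorization $A = B \odot B^\top$ with $B \in \RR^{n \times 2}$, lift $B$ entrywise to a matrix $\tilde{B}$ over $\cR^+$ (for instance by sending an entry $b$ to $t^b$) and set $\tilde{A} = \tilde{B}\tilde{B}^\top$. Since every entry of $\tilde{B}$ is a positive real Puiseux series, the matrix product is subtraction-free, no leading-term cancellation occurs, and $\tilde{A}$ is a symmetric matrix of rank at most $2$ over $\cR^+$ with $\trop(\tilde{A}) = A$.

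For the rightmost containment, let $A \in \trop_{\cC^+}(\cS^2_n)$. Then $A$ has symmetric Kapranov rank at most two, hence symmetric tropical rank at most two, and therefore corresponds, modulo symmetric tropical rescaling of rows and columns by a common diagonal factor, to a symbic tree by~\cite{cai2024symmetric}. By Corollary~\ref{cor:symmBarv} it suffices to show that this symbic tree is a caterpillar. Suppose for contradiction that it is not; I would then locate a minimal non-caterpillar symbic substructure and read off the corresponding principal submatrix $A'$ of $A$. Since being a symmetric matrix of rank at most $2$ and admitting a lift with entries in $\cC^+$ are both inherited by principal submatrices, $A' \in \trop_{\cC^+}(\cS^2_{n'})$. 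I would then contradict this by exhibiting a $3\times 3$ principal submatrix of $A'$ whose tropicalized symmetric determinant
\[
(a_{11}\odot a_{22}\odot a_{33}) \oplus (a_{12}\odot a_{13}\odot a_{23}) \oplus (a_{11}\odot a_{23}\odot a_{23}) \oplus (a_{22}\odot a_{13}\odot a_{13}) \oplus (a_{33}\odot a_{12}\odot a_{12})
\]
attains its minimum only at the monomials $a_{11}a_{22}a_{33}$ and $a_{12}a_{13}a_{23}$, both of which have positive coefficient in the classical determinant expansion. Any lift of such a $3\times 3$ submatrix by positive Puiseux series then has strictly positive determinant, producing the desired contradiction.

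The main obstacle is pinning down the minimal non-caterpillar symbic substructure and verifying that it always yields, after the symmetric rescaling described above, a $3\times 3$ principal submatrix of the claimed same-sign shape. The color-swap symmetry of symbic trees should help here: the fixed-point set of the color-swap involution is a path, so any non-caterpillar symbic tree must contain a branching off this central path together with its mirror image in the opposite color class. I expect that threading this forced symmetric branching through the explicit correspondence of~\cite{cai2024symmetric} between symbic trees and symmetric tropical rank $2$ matrices isolates the same-sign $3\times 3$ obstruction above, in direct analogy with the $\mathrm{diag}(a,b,c)$ obstruction used in the proof of Theorem~\ref{thm:nonsymmrk2}.
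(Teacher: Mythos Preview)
Your chain of containments is built around the wrong invariant: the theorem is about the ordinary Barvinok rank, not the \emph{symmetric} Barvinok rank, and these differ for symmetric matrices. The paper notes this explicitly right after the theorem statement, and Corollary~\ref{cor:symmBarv} pins down the difference: a symmetric tropical rank~$2$ matrix has symmetric Barvinok rank~$2$ if and only if its symbic tree is a caterpillar \emph{with a single fixed point}. Caterpillar symbic trees whose fixed-point set is a nondegenerate path (type~(a) in the paper's Figure~\ref{fig:Barv-rk-2-symbic}) have Barvinok rank~$2$ but \emph{not} symmetric Barvinok rank~$2$. Consequently your leftmost containment, which assumes a factorization $A=B\odot B^\top$, misses exactly these matrices; and your rightmost containment, if it went through, would prove the false statement $\trop_{\cC^+}(\cS^2_n)\subseteq\{\text{symmetric Barvinok rank}\le 2\}$, since type~(a) matrices do lie in $\trop_{\cC^+}(\cS^2_n)$. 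The paper anticipates precisely this obstacle (``We cannot use the Barvinok rank 2 factorization directly \dots\ because the factorization is not always symmetric'') and instead constructs explicit $\cR^+$-lifts for each of the two caterpillar types separately, the type~(a) lift being the nontrivial step.

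Two smaller issues compound this. First, your appeal to Corollary~\ref{cor:symmBarv} is circular: its ``if'' direction is extracted from the proof of the present theorem. Second, for the obstruction you restrict to \emph{principal} $3\times 3$ submatrices and the five-term symmetric determinant, but the paper's argument finds the Figure~\ref{fig:non-symm-non-caterpillar} subtree via a \emph{nonsimultaneous} permutation of rows and columns, i.e.\ a non-principal $3\times 3$ minor with the ordinary six-term determinant; the same-sign obstruction holds there, but not necessarily among principal minors alone.
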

Here we are using the usual Barvinok rank and not the symmetric Barvinok rank; that is, we do not require the tropical matrix factorization to be symmetric.  Symmetric Barvinok rank $2$ matrices will be discussed in Corollary~\ref{cor:symmBarv}.


\begin{proof}Let ${\it SB}^2_n$ be the set of symmetric tropical matrices with Barvinok rank at most $2$.
    As in the proof of Theorem~\ref{thm:nonsymmrk2}, since $\tropR(\cS^2_n) \subseteq \tropC(\cS^2_n)$, we proceed by proving that
    \[
  \tropC(\cS^2_n) \subseteq {{\it SB}}^2_n \subseteq \tropR(\cS^2_n).
    \]
        
We will first prove the inclusion on the right.  We cannot use the Barvinok rank 2 factorization directly as we did in the proof of Theorem~\ref{thm:nonsymmrk2} because the factorization is not always symmetric. Instead we will construct explicit lifts to rank $2$ matrices over $\cR^+$ for a symmetric matrix with Barvinok rank 2.
Having Barvinok rank 2 still is equivalent to the bicolored trees being caterpillar. There are two types of caterpillar symbic trees, shown in Figure~\ref{fig:Barv-rk-2-symbic}.

    First we construct an explicit lift for matrices of the type in Figure \ref{fig:Barv-rk-2-symbic}(a). For this tree type, we denote by $i$ the internal vertex adjacent to leaves $i$ and $i'$. As matrices preserve symmetric tropical rank and Barvinok rank under simultaneous permutations of rows and columns (and their corresponding trees simultaneously permute the leaves of each color), we can consider without loss of generality that the leaves are ordered $1, n, \ldots, 3, 2$.
    We denote by $d_i$ the length along the internal vertices from $1$ to $i$. In particular, we have $0 \le d_n \le d_{n-1} \le \ldots \le d_2$.
    Up to simultaneous tropical scaling of rows and columns, this caterpillar symbic tree corresponds to a unique matrix $$ M = \begin{bmatrix}0 & 0 & 0 &\ldots& 0 \\
    0 & d_2 & d_3 & \ldots & d_n \\
    0 & d_3 & d_3 & \ldots & d_n \\
    \vdots & \vdots & \vdots & \vdots & \vdots\\
    0 & d_n & d_n & \ldots & d_n\end{bmatrix}.$$
    In particular, $$M_{ij} = \begin{cases}
        0 & \text{if } i=1 \text{ or } j=1\\
        d_{\max(i,j)} & \text{otherwise.}
    \end{cases}.$$
    This matrix has explicit lift $$\tilde{M} = \begin{bmatrix} 1 & 1 & 1+t^{d_3} & \ldots & 1+t^{d_n} \\
    1 & t^{d_2} & t^{d_3} + t^{d_2} & \ldots & t^{d_n} + t^{d_2} \\
    1+t^{d_3} & t^{d_3} + t^{d_2} & 2t^{d_3} + t^{2d_3} + t^{d_2} & \ldots & t^{d_n} + t^{d_3} + t^{d_n + d_3} + t^{d_2} \\
    \vdots & \vdots & \vdots & \vdots & \vdots\\
    1 + t^{d^n} & t^{d_n} + t^{d_2} & t^{d_n} + t^{d_3} + t^{d_n + d_3} + t^{d_2} & \ldots & 2t^{d_n} + t^{2d_n} + t^{d_2}
    \end{bmatrix}.$$
    In particular, we choose $$\tilde{M}_{ij} = \begin{cases} 
    1 & \text{if } i=j=1\text{, or }i=2, j=1\\
    t^{d_2} & \text{if } i=j=2\\
    t^{d_i}\cdot\tilde{M}_{1j} + \tilde{M}_{2j} & \text{if } i\ge j, i > 2 \\
    \tilde{M}_{ji} & \text{if } i < j.
    \end{cases}.$$
    In the matrix $\tilde{M}$ above, the leftmost term is the leading term in each entry.
    By construction this matrix is symmetric. Furthermore, each row is a positive linear combination of the (positive) first two rows, so it is of usual rank 2 and every entry is positive. It remains to show that $\trop(\tilde{M}) = M$.

    For $i, j \le 2$ by inspection the matrix has the right valuation. For $i > 2$, then entries of the first column
    \[\tilde{M}_{i1} = t^{d_i} \tilde{M}_{11} + \tilde{M}_{21} = t^{d_i} + 1\]
    have valuation zero, and the entries of the second column
    \[\tilde{M}_{i2} = t^{d_i} \tilde{M}_{12} + \tilde{M}_{22} = t^{d_i} \cdot 1 + t^{d_2} = t^{d_i} + t^{d_2}\]
    have valuations $d_i$ as desired. By the symmetric nature of the matrix, the first two rows also have the correct valuation. Then for $i > 2, j > 2$, $\tilde{M}_{ij} = t^{d_i}\tilde{M}_{i1} + \tilde{M}_{2i}$. We cannot have cancellation because everything is positive, so $$\text{val}(\tilde{M}_{ij}) = \min(\text{val}(t^{d_i}\tilde{M}_{1j}),\text{val}(\tilde{M}_{2j})) = \min(d_i + 0, d_j)  = d_{\max(i,j)}$$ as desired.

    Now we show that the second type of tree also has a lift. The second type, in Figure~\ref{fig:Barv-rk-2-symbic}(b), is not unique up to simultaneous row and column permutation for $n > 3$, so we will have to examine it more closely. For $n=3$, the only tree up to simultaneous row and column permutation is Figure~\ref{fig:caterpillar-symbic-tree-3}(a). 
    Let $d_i$ be the distance from the fixed point $O$ to the internal vertex attached to the leaf $i$. We have $0 \le d_3 \le d_2 = d_1$. Then the matrix associated with Figure~\ref{fig:caterpillar-symbic-tree-3}(a) is 
    $$M = \begin{bmatrix} 0 & d_2 & d_3 \\ d_2 & 0 & 0 \\ d_3 & 0 & 0\end{bmatrix}.$$
 Then $M$ has a symmetric factorization into $M = M_1 \odot M_1^\top$ where $M_1 = \begin{bmatrix} 0 & d_2 \\ d_2 & 0\\d_3 & 0\end{bmatrix}$. Any lift of $M_1$ into $\cR^+$ gives us a  symmetric rank $2$ lift of $M$ to $\cR^+$. More concretely, we can take 
    $$\tilde{M_1} = \begin{bmatrix}1 & t^{d_2} \\ t^{d_2} & 1\\t^{d_3} & 1\end{bmatrix}, \; \tilde{M} = \tilde{M_1} \odot \tilde{M_1}^\top = \begin{bmatrix} 1+t^{2d_2} & 2t^{d_2} & t^{d_3} + t^{d_2}\\2t^{d_2} & 1+t^{2d_2} & 1+t^{d_2+d_3}\\t^{d_3} + t^{d_2}& 1+t^{2d_2} & 1+t^{2d_3}\end{bmatrix}.$$

    For $n > 3$, 
    we can assume that, after simultaneous row and column permutation, as in Figure~\ref{fig:caterpillar-symbic-tree-3} the outer two leaves are $1$ and $2$, and that $2$ and $3$ share the same color on each ``side'' of the tree, and the other leaves are $4, \ldots, n$ in order from outside in. Then, up to these assumptions, the only kinds of symbic trees of this type are determined entirely by whether or not the color of leaf $i$, $i>3$, shares the color of the leaf $1$ or the leaf $2$ on each side.

    Defining distances as in the $n=3$ case, a tree of this type has corresponding matrix 
    
    $$M_{ij} = \begin{cases} 0 & \text{if the leaves } i, j \text{ have the same color on each side of the tree},\\
    \min(d_i, d_j) & \text{otherwise.}\end{cases}$$

    We claim that every tree of this type has a matrix $M$ with a symmetric factorization into $M_1 \odot M_1^\top$. In particular, the $i$-th row of $M_1$ is $\begin{bmatrix}0 & d_i\end{bmatrix}$ if leaf $i$ and leaf $1$ share the same color on each side of the tree, and $\begin{bmatrix}d_i & 0\end{bmatrix}$ otherwise. If $i$ and $j$ share the same color on each side of the tree, then without loss of generality $M_{ij} = \begin{bmatrix} 0 & d_i\end{bmatrix} \odot \begin{bmatrix} 0 \\ d_j\end{bmatrix} = \min(0, d_i+d_j) = 0$, and if they have different colors, then without loss of generality $M_{ij} = \begin{bmatrix} d_i & 0 \end{bmatrix} \odot \begin{bmatrix} 0 \\ d_j \end{bmatrix} = \min(d_i, d_j)$ as desired. Thus, as in the $n=3$ case, any positive lift of $M_1$ yields a symmetric rank 2 positive lift of $M$.

    For example, the trees in Figure \ref{fig:caterpillar-symbic-tree-3}(b) and Figure \ref{fig:caterpillar-symbic-tree-3}(c) have, respectively, tropical symmetric matrix factorizations into $$M_{1(b)} = \begin{bmatrix}0 & d_1\\d_2 & 0 \\ d_3 & 0\\d_4 & 0\end{bmatrix} \text{ and } M_{1(c)} = \begin{bmatrix}0 & d_1 \\ d_2 & 0 \\ d_3 & 0 \\ 0 & d_4\end{bmatrix}.$$

    Now that we have shown that the set of Barvinok rank at most 2 symmetric matrices are contained in $\tropR (\cS_{n}^2)$, we need to show that $\tropC (\cS_n^2)$ is contained in the set of Barvinok rank 2 matrices and then we are done. As in the proof of Theorem~\ref{thm:nonsymmrk2}, any matrix that isn't Barvinok rank 2 must contain the matrix associated with the tree in Figure~\ref{fig:non-symm-non-caterpillar} as a subtree. For symmetric matrices, this must appear as some \textit{non}simultaneous permutation of rows and columns, but it still yields a tropical subdeterminant where the minimum is achieved at two monomials of the same sign, and so any matrix that is not Barvinok rank 2 must not lie in $\tropC(\cS_n^2)$, as desired. 
\end{proof}

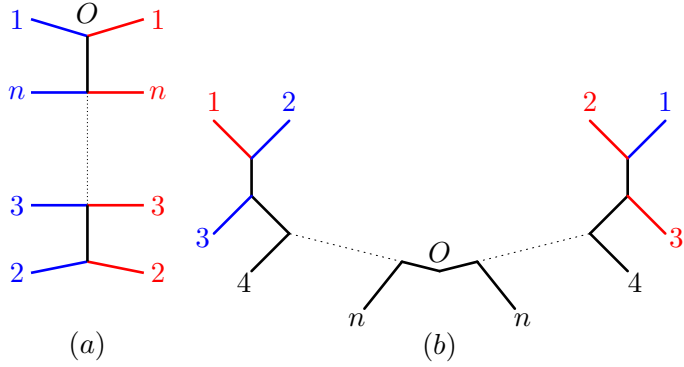
\begin{figure}
    \centering
    \begin{tikzpicture}[scale=0.75]
        \draw [line width=1pt, blue] (0, 3) -- (-1, 3.3);
        \draw [line width=1pt, blue] (0,2) -- (-1, 2);
        \draw [line width=1pt, blue] (0, 0) -- (-1, 0);
        \draw [line width=1pt, blue] (0, -1) -- (-1, -1.2);
        
        \draw [line width=1pt, red] (0,3) -- (1, 3.3);
        \draw [line width=1pt, red] (0,2) -- (1, 2);
        \draw [line width=1pt, red] (0, 0) -- (1, 0);
        \draw [line width=1pt, red] (0, -1) -- (1, -1.2);

        \draw [line width=1pt] (0, 3) -- (0,2);
        \draw [densely dotted] (0,2) -- (0, 0);
        \draw [line width=1pt] (0,0) -- (0, -1);

        \draw[color=blue] (-1.25, 3.3) node {$1$};
        \draw[color=blue] (-1.25, 2) node {$n$};
        \draw[color=blue] (-1.25, 0) node {$3$};
        \draw[color=blue] (-1.25, -1.2) node {$2$};

        \draw[color=red] (1.25, 3.3) node {$1$};
        \draw[color=red] (1.25, 2) node {$n$};
        \draw[color=red] (1.25, 0) node {$3$};
        \draw[color=red] (1.25, -1.2) node {$2$};

        \draw (0, 3.4) node {$O$};
        \draw (0, -2.5) node {$(a)$};
    \end{tikzpicture}
    \begin{tikzpicture}[line cap=round,line join=round,x=1cm,y=1cm, scale=0.5]
\draw [line width=1pt] (0,-1)-- (1, -0.75);
\draw [dotted] (1, -0.75) -- (4,0);
\draw [line width=1pt] (0,-1)-- (-1, -0.75);
\draw [dotted] (-1, -0.75) -- (-4,0);
\draw [line width=1pt] (4,0)-- (5,1);
\draw [line width=1pt] (-4,0)-- (-5,1);
\draw [line width=1pt] (5,1)-- (5,2);
\draw [line width=1pt] (-5,1)-- (-5,2);

\draw [line width=1pt] (-4,0) -- (-5,-1);
\draw [line width=1pt] (-1, -0.75) -- (-2, -2);
\draw [line width=1pt, color=blue] (-5,1) -- (-6, 0);
\draw [line width=1pt, color=red] (-5,2) -- (-6,3);
\draw [line width=1pt, color=blue] (-5,2) -- (-4,3);

\draw [line width=1pt] (4,0) -- (5,-1);
\draw [line width=1pt] (1, -0.75) -- (2, -2);
\draw [line width=1pt, color=red] (5,1) -- (6, 0);
\draw [line width=1pt, color=blue] (5,2) -- (6,3);
\draw [line width=1pt, color=red] (5,2) -- (4,3);

\draw[color=blue] (-4,3.5) node {$2$};
\draw[color=red] (-6,3.5) node {$1$};
\draw[color=blue] (-6.3, -0.1) node {$3$};
\draw (-5.2, -1.3) node {$4$};
\draw (-2.2, -2.3) node {$n$};

\draw[color=red] (4,3.5) node {$2$};
\draw[color=blue] (6,3.5) node {$1$};
\draw[color=red] (6.3, -0.1) node {$3$};
\draw (5.2, -1.3) node {$4$};
\draw (2.2, -2.3) node {$n$};

\draw (0, -0.5) node {$O$};

\draw (0, -3) node {$(b)$};
\end{tikzpicture}
    \caption{The two types of Barvinok rank 2 symmetric tropical matrices, as used in the proof of Theorem~\ref{thm:symmrk2}. In type (b), the black leaves $4, \ldots, n$ have indeterminate, opposite colors---every choice of color pair determines a different combinatorial type. We suppress edge lengths. In (a), internal edges can be any nonnegative number, while in (b) edge lengths must be symmetric about the midpoint. Without loss of generality, we translate the trees so that $O$ marks the origin.}
    \label{fig:Barv-rk-2-symbic}
\end{figure}

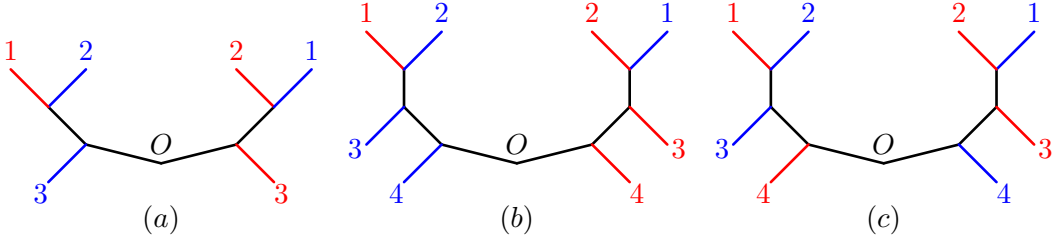
\begin{figure}
    \centering
    \begin{tikzpicture}[line cap=round,line join=round,x=1cm,y=1cm, scale=0.5]

\draw [line width=1pt] (0,-0.5)-- (2,0);
\draw [line width=1pt] (0,-0.5)-- (-2,0);
\draw [line width=1pt] (2,0)-- (3,1);
\draw [line width=1pt] (-2,0)-- (-3,1);

\draw [line width=1pt, color=blue] (-2,0) -- (-3,-1);
\draw [line width=1pt, color=blue] (-3,1) -- (-2, 2);
\draw [line width=1pt, color=red] (-3,1) -- (-4,2);

\draw [line width=1pt, color=red] (2,0) -- (3,-1);
\draw [line width=1pt, color=red] (3,1) -- (2, 2);
\draw [line width=1pt, color=blue] (3,1) -- (4,2);

\draw[color=red] (-4,2.5) node {$1$};
\draw[color=blue] (-2, 2.5) node {$2$};
\draw[color=blue] (-3.2, -1.3) node {$3$};

\draw[color=blue] (4,2.5) node {$1$};
\draw[color=red] (2, 2.5) node {$2$};
\draw[color=red] (3.2, -1.3) node {$3$};

\draw (0, 0) node {$O$};

\draw (0, -2) node {$(a)$};
\end{tikzpicture}
\begin{tikzpicture}[line cap=round,line join=round,x=1cm,y=1cm, scale=0.5]
\draw [line width=1pt] (0,-0.5)-- (2,0);
\draw [line width=1pt] (0,-0.5)-- (-2,0);
\draw [line width=1pt] (2,0)-- (3,1);
\draw [line width=1pt] (-2,0)-- (-3,1);
\draw [line width=1pt] (3,1)-- (3,2);
\draw [line width=1pt] (-3,1)-- (-3,2);

\draw [line width=1pt, color=blue] (-2,0) -- (-3,-1);
\draw [line width=1pt, color=blue] (-3,1) -- (-4, 0);
\draw [line width=1pt, color=red] (-3,2) -- (-4,3);
\draw [line width=1pt, color=blue] (-3,2) -- (-2,3);

\draw [line width=1pt, color=red] (2,0) -- (3,-1);
\draw [line width=1pt, color=red] (3,1) -- (4, 0);
\draw [line width=1pt, color=blue] (3,2) -- (4,3);
\draw [line width=1pt, color=red] (3,2) -- (2,3);

\draw[color=blue] (-2,3.5) node {$2$};
\draw[color=red] (-4,3.5) node {$1$};
\draw[color=blue] (-4.3, -0.1) node {$3$};
\draw[color=blue] (-3.2, -1.3) node {$4$};

\draw[color=red] (2,3.5) node {$2$};
\draw[color=blue] (4,3.5) node {$1$};
\draw[color=red] (4.3, -0.1) node {$3$};
\draw[color=red] (3.2, -1.3) node {$4$};

\draw (0, 0) node {$O$};

\draw (0, -2) node {$(b)$};
\end{tikzpicture}
\begin{tikzpicture}[line cap=round,line join=round,x=1cm,y=1cm, scale=0.5]
\draw [line width=1pt] (0,-0.5)-- (2,0);
\draw [line width=1pt] (0,-0.5)-- (-2,0);
\draw [line width=1pt] (2,0)-- (3,1);
\draw [line width=1pt] (-2,0)-- (-3,1);
\draw [line width=1pt] (3,1)-- (3,2);
\draw [line width=1pt] (-3,1)-- (-3,2);

\draw [line width=1pt, color=red] (-2,0) -- (-3,-1);
\draw [line width=1pt, color=blue] (-3,1) -- (-4, 0);
\draw [line width=1pt, color=red] (-3,2) -- (-4,3);
\draw [line width=1pt, color=blue] (-3,2) -- (-2,3);

\draw [line width=1pt, color=blue] (2,0) -- (3,-1);
\draw [line width=1pt, color=red] (3,1) -- (4, 0);
\draw [line width=1pt, color=blue] (3,2) -- (4,3);
\draw [line width=1pt, color=red] (3,2) -- (2,3);

\draw[color=blue] (-2,3.5) node {$2$};
\draw[color=red] (-4,3.5) node {$1$};
\draw[color=blue] (-4.3, -0.1) node {$3$};
\draw[color=red] (-3.2, -1.3) node {$4$};

\draw[color=red] (2,3.5) node {$2$};
\draw[color=blue] (4,3.5) node {$1$};
\draw[color=red] (4.3, -0.1) node {$3$};
\draw[color=blue] (3.2, -1.3) node {$4$};

\draw (0, 0) node {$O$};

\draw (0, -2) node {$(c)$};
\end{tikzpicture}
    \caption{(a) is the only symbic tree of its combinatorial type (i.e., up to simultaneous permutations of red and blue leaves) for $n=3$. (b) and (c) are the only two symbic trees of this combinatorial type for $n=4$. Each tree is translated so that the origin is at the point marked $O$.}
    \label{fig:caterpillar-symbic-tree-3}
\end{figure}

This result can be restated using the tropical generators introduced in \cite{BLS}.
\begin{corollary}\label{cor:symm-rk2-positive-generators}
    The $3 \times 3$ minors of the symmetric $n \times n$ matrix forms a set of positive and really-positive tropical generators for $\cS_n^2$. 
\end{corollary}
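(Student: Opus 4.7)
The plan is to leverage Theorem~\ref{thm:symmrk2}, which identifies $\tropC(\cS_n^2)$ and $\tropR(\cS_n^2)$ with the set ${\it SB}^2_n$ of symmetric tropical matrices of Barvinok rank at most $2$. Recall that a set $\{f_i\}$ of polynomials in $I(\cS_n^2)$ is a set of positive (resp.\ really positive) tropical generators in the sense of \cite{BLS} if
\[
\tropC(\cS_n^2) = \bigcap_i \tropC(V(f_i)) \quad \text{and} \quad \tropR(\cS_n^2) = \bigcap_i \tropR(V(f_i)).
\]
I would establish both equalities simultaneously, with the $f_i$ ranging over the $3 \times 3$ minors of the generic symmetric matrix.

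For the containment $\subseteq$ in each equality, the argument is elementary: every $3\times 3$ minor $M$ vanishes identically on $\cS_n^2$, so $V(M) \supseteq \cS_n^2$, hence $\tropC(V(M)) \supseteq \tropC(\cS_n^2)$ and $\tropR(V(M)) \supseteq \tropR(\cS_n^2)$. Intersecting over all $3 \times 3$ minors gives one direction of each equality.

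For the reverse containment $\supseteq$, I would argue by contrapositive. Suppose $A$ is a symmetric tropical matrix with $A \notin {\it SB}^2_n$. By the last part of the proof of Theorem~\ref{thm:symmrk2}, the symbic tree associated with $A$ is not a caterpillar and therefore contains the subtree of Figure~\ref{fig:non-symm-non-caterpillar}. After a \emph{non}simultaneous permutation of rows and columns together with a tropical row/column rescaling, this produces a $3 \times 3$ submatrix of $A$ supported on a row set $I$ and column set $J$ with $I \neq J$, of the form $\begin{pmatrix} a & 0 & 0 \\ 0 & b & 0 \\ 0 & 0 & c \end{pmatrix}$ for some $a,b,c > 0$. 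For the corresponding $3\times 3$ minor $M$ of the generic symmetric matrix, the six determinantal monomials are symbolically distinct variables, and evaluating their tropicalizations at $A$ yields $a+b+c,\ 0,\ 0$ for the three even-permutation (positive-sign) monomials and $a,\ b,\ c$ for the three odd-permutation (negative-sign) monomials. Hence the positive part of $\trop(M)$ attains minimum $0$ strictly below the minimum $\min(a,b,c) > 0$ of the negative part, so $A \notin \tropC(V(M))$ and a fortiori $A \notin \tropR(V(M))$. Therefore $A$ lies outside the intersection of the positive tropical hypersurfaces of the $3 \times 3$ minors, completing the reverse inclusion.

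The only delicate point I foresee is justifying that the relevant $3 \times 3$ minor of the symmetric matrix genuinely has six symbolically distinct determinantal monomials, so that no symmetry-induced cancellation $m_{ij}=m_{ji}$ disturbs the positive/negative bookkeeping. This is ensured by choosing the submatrix so that $I \neq J$ and the six positions $(i_k,j_{\sigma(k)})$ appearing in $\det$ are pairwise distinct as unordered pairs, which is precisely what the nonsimultaneous row/column permutation in the proof of Theorem~\ref{thm:symmrk2} provides; everything else is a direct transcription of that proof.
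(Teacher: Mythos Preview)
Your proposal is correct and takes essentially the same approach as the paper: both derive the corollary from the final paragraph of the proof of Theorem~\ref{thm:symmrk2}, where a non-caterpillar tree forces a $3\times 3$ submatrix whose tropical determinant attains its minimum only at same-sign monomials. The paper's proof is a one-line pointer to that argument, while you spell out both containments and the contrapositive (and the monomial-distinctness caveat) more explicitly.
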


In other words, it suffices to check that the minimum in every $3 \times 3$ tropical minor is achieved at two monomials of opposite signs to see that a $n \times n$ symmetric rank 2 matrix is in the positive, or really-positive, parts of the variety.

\begin{proof}
    From the proof of Theorem~\ref{thm:symmrk2}, if a matrix has Barvinok rank $> 2$, then it has some $3 \times 3$ submatrix where the minimums in its tropical determinant are achieved at only monomials of the same sign, which is the contrapositive of the corollary.
\end{proof}

\begin{corollary}
\label{cor:symmBarv}
    A symmetric tropical rank $2$ matrix has symmetric Barvinok rank $2$ if and only if its symbic tree is caterpillar with only one fixed point.
\end{corollary}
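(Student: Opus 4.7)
My plan is to exploit the bijection between symmetric tropical rank $2$ matrices modulo simultaneous rescaling and symbic trees, together with the classification of caterpillar symbic trees into the two combinatorial types in Figure~\ref{fig:Barv-rk-2-symbic}: type (a), in which each internal vertex carries a blue leaf and a red leaf of the same index so that the color-swap automorphism fixes the entire internal path, and type (b), in which the two arms of the tree mirror each other and the fixed point set is just the midpoint $O$. Since any symmetric Barvinok rank $2$ factorization $M = B \odot B^\top$ is in particular a Barvinok rank $2$ factorization $M = B \odot C$ with $C = B^\top$, and Barvinok rank $2$ characterizes caterpillar bicolored trees, only caterpillar symbic trees are candidates.

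For the ``if'' direction, I would invoke the explicit construction in the proof of Theorem~\ref{thm:symmrk2}: every type (b) matrix factors as $M = M_1 \odot M_1^\top$, where the $i$-th row of $M_1 \in \RR^{n \times 2}$ is $\begin{bmatrix}0 & d_i\end{bmatrix}$ or $\begin{bmatrix}d_i & 0\end{bmatrix}$ depending on the color pattern at leaf~$i$. This gives symmetric Barvinok rank at most $2$, and exactly $2$ because symmetric tropical rank equals $2$.

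For the ``only if'' direction, I would rule out type (a). After simultaneous tropical rescaling, which preserves symmetric Barvinok rank, the canonical matrix from Theorem~\ref{thm:symmrk2} has first row and column zero, $M_{ij} = d_{\max(i,j)}$ for $i, j \geq 2$ with $0 \leq d_n \leq \cdots \leq d_2$, and necessarily $d_2 > 0$ (otherwise $M = 0$, contradicting symmetric tropical rank $2$). Assuming $M = B \odot B^\top$ with $B \in \RR^{n \times 2}$, the entry $M_{11} = 0$ forces $\min(B_{11}, B_{12}) = 0$, so after swapping the two columns of $B$ if needed I may assume $B_{11} = 0$ and $B_{12} \geq 0$; and $M_{22} = d_2 > 0$ forces both $B_{21}, B_{22} \geq d_2/2$. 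Then $M_{12} = \min(B_{21}, B_{12} + B_{22}) \geq d_2/2 > 0$ contradicts $M_{12} = 0$. The main subtlety lies in pinning down the two types of caterpillar symbic trees by their fixed-point structure and arguing that simultaneous rescaling preserves symmetric Barvinok rank; once the normalized type (a) matrix is in hand, the remaining three-entry inequality is immediate.
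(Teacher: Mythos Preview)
Your argument is correct. The ``if'' direction matches the paper's, invoking the explicit factorization $M = M_1 \odot M_1^\top$ for type~(b) trees from the proof of Theorem~\ref{thm:symmrk2}. For the ``only if'' direction the two approaches diverge. The paper works \emph{forward} from an assumed factorization $A = B \odot B^\top$: it normalizes $B$ so that each row is $[0,b_i]$ or $[b_i,0]$ with $b_i \ge 0$, observes that the two largest values $b_1,b_2$ enter $A$ only through the single off-diagonal entry $A_{12} = \min(b_1,b_2)$, and replaces both by that minimum to recognize the result as precisely the type~(b) factorization already built in Theorem~\ref{thm:symmrk2}. You instead argue by contrapositive, giving a direct obstruction for type~(a): from the normalized matrix you read off $M_{11}=0$, $M_{12}=0$, $M_{22}=d_2>0$, and the three-line inequality $M_{12} \ge d_2/2 > 0$ rules out any symmetric factorization. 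Your route is shorter and uses only the top-left $2\times 2$ block; the paper's route is more structural and in effect classifies all symmetric Barvinok-rank-$2$ factorizations up to the obvious symmetries. Both arguments rest on the dichotomy of caterpillar symbic trees into types~(a) and~(b) stated in the proof of Theorem~\ref{thm:symmrk2}.
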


\begin{proof}
    The ``if'' direction follows from the proof of the theorem above, where an explicit symmetric factorization is given.
    The other direction goes as follows. Let $A$ be a $n \times n$ symmetric tropical rank 2 matrix, and assume we have a symmetric factorization $A = B \odot B^\top$, where $B$ is an $n \times 2$ matrix. 
 By permuting and tropically rescaling rows and columns of $A$ and $B$,
    we may assume that $B$ has all nonnegative entries, with at least one 0 in each row. We may assume $B$ does not have a zero column; otherwise $B \odot B$ would be the zero matrix. Thus, up to row and column permutations, the first three rows of $B$ appear as $\begin{bmatrix} 0 & b_1 \\ b_2 & 0\\ b_3 & 0\end{bmatrix}$, and the remaining rows are either $\begin{bmatrix} 0 & b_i\end{bmatrix}$ or $\begin{bmatrix} b_i & 0\end{bmatrix}$ with $b_i \geq 0$.
    
    Furthermore, we can assume that $b_3 \le b_1$ and $b_3 \le b_2$, and for $i \ge 4$, $b_i \le b_{i-1}$. Note that $b_1$ or $b_2$ can only appear in $A = B \odot B^\top$ at $A_{1,2}$ and $A_{2,1}$, and in particular $A_{1,2} = A_{2,1} = \min(b_1, b_2)$. Thus we may replace $B$ with $B'$ by replacing $b_1, b_2$ with $\min(b_1, b_2)$, without affecting $A$. This $B'$ is precisely the matrix factorization of a matrix whose symbic tree is a caterpillar of only one fixed point from the proof of Theorem~\ref{thm:symmrk2}.
    
    In other words, from following the proof of Theorem~\ref{thm:symmrk2} backwards, any symmetric Barvinok rank 2 matrix has symbic trees as in Figure~\ref{fig:caterpillar-symbic-tree-3}, with caterpillars of only one fixed point, and leaf colors determined by whether row $i$ has the zero on the first or second column.
\end{proof}

\section{Singular matrices}
\label{sec:square-corank-one}

The variety $\cM_{n,n}^{n-1}$ of singular $n\times n$ matrices is a hypersurface defined by the determinant of an $n \times n$ matrix of indeterminates.
The \textit{Newton polytope} of a polynomial $f$ is the convex hull of the exponent vectors of its monomial terms. The (inner) \textit{normal cone} at a point $x$ of a convex set $S\subset \mathbb{R}^n$ is the set of real vectors $\{v \in \mathbb{R}^n: x\cdot v \leq y\cdot v~ \forall y \in S\}$.  For a polynomial $f$ with trivially valued coefficients, the (complex) tropicalization of its hypersurface consists of normal cones to the edges of the Newton polytope.  

The Newton polytope of the determinant is the Birkhoff polytope, whose vertices are $n \times n$ permutation matrices. It consists of matrices with nonnegative entries, all of whose row sums and column sums are equal to 1.  The tropical hypersurface $\trop_\cC(\cM_{n,n}^{n-1})$ is the codimension-1 skeleton of the normal fan of the Birkhoff polytope. Its maximal cones are normal cones of the edges of the polytope.  This hypersurface has lineality space spanned by matrices which are everywhere zero except along a row or column of all ones. Two permutations $\sigma_1$ and $ \sigma_2 \in S_n$ form an edge in the Birkhoff polytope when $\sigma_1\sigma_2^{-1}$ is a cycle \cite[Corollary 2.1]{BL96}. A maximal cone of $\trop_\cC(\cM_{n,n}^{n-1})$ dual to the edge formed by $\sigma_1$ and $\sigma_2$ is in the positive part $\trop_{\cC^+}(\cM_{n,n}^{n-1})$ exactly when $\sigma_1$ and $\sigma_2$ have opposite signs \cite[Proposition 3.1]{BLS}. 


\begin{theorem}\label{thm:nonsymm-hypersurface-real-positive}
    If the minimum of the tropical determinant of a $n \times n$ matrix $M$ is achieved at two monomials $\sigma_1$ and $ \sigma_2$ with opposite signs, then that matrix has a lift to an $n \times n$ singular matrix of real positive Puiseux series entries. That is, \[\tropC(\cM^{n-1}_{n,n}) = \tropR(\cM^{n-1}_{n,n}).\]
\end{theorem}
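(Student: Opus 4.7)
The plan is to prove the opposite-signs hypothesis yields a lift in $\cR^+$ by perturbing a single entry of the naive lift $\tilde{M}_{ij} = t^{m_{ij}}$. Since $\det(\tilde{M}) = 0$ is linear in that entry, we can solve explicitly and read off the sign and valuation of the solution.

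Because $\tropR^+(\cM^{n-1}_{n,n})$ and $\tropR(\cM^{n-1}_{n,n})$ are closed and the union of interiors of maximal cones of $\tropC(\cM^{n-1}_{n,n})$ is dense in $\tropC$, it suffices to produce a lift at an arbitrary interior point of a maximal cone. On such an interior, the tropical minimum $w = \val(\det(M))$ is achieved at exactly the two permutations $\sigma_1, \sigma_2$ whose edge in the Birkhoff polytope indexes the cone. Choose $i_0$ with $\sigma_1(i_0) \neq \sigma_2(i_0)$ (possible since $\sigma_1 \neq \sigma_2$) and set $j_0 := \sigma_1(i_0)$. Define $\tilde{M}_{ij} := t^{m_{ij}}$ for $(i,j) \neq (i_0, j_0)$ and leave $\tilde{M}_{i_0 j_0} := x$ as an unknown. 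By multilinearity, $\det(\tilde{M}) = Ax + B$ where $A = (-1)^{i_0 + j_0} \det(\tilde{M}^{(i_0, j_0)})$ is the $(i_0, j_0)$-cofactor and $B$ is the part of the determinant constant in $x$; the plan is to set $x := -B/A$.

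The key computation reads off the valuations and leading coefficients of $A$ and $B$. Because $\sigma_1$ is the only minimum-achieving permutation with $\sigma(i_0) = j_0$ (as $\sigma_2$ avoids that entry), the cofactor identity $\mathrm{sign}(\sigma) = (-1)^{i_0 + j_0}\, \mathrm{sign}(\sigma|_{[n]\setminus\{i_0\}})$ gives $\val(A) = w - m_{i_0 j_0}$ with leading coefficient $\mathrm{sign}(\sigma_1)$; symmetrically, $\val(B) = w$ with leading coefficient $\mathrm{sign}(\sigma_2)$. Therefore $x = -B/A \in \cR$ has $\val(x) = m_{i_0 j_0}$ and leading coefficient $-\mathrm{sign}(\sigma_2)/\mathrm{sign}(\sigma_1) = +1$ under the opposite-signs hypothesis, so $x \in \cR^+$. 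The resulting matrix $\tilde{M}$ has entries in $\cR^+$, the correct valuation data, and $\det(\tilde{M}) = 0$ by construction.

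The main obstacle is the broader equality $\tropC(\cM^{n-1}_{n,n}) = \tropR(\cM^{n-1}_{n,n})$, since the opposite-signs hypothesis fails on the maximal cones indexed by Birkhoff edges whose cycle $\sigma_1 \sigma_2^{-1}$ has odd length, in which case $\mathrm{sign}(\sigma_1) = \mathrm{sign}(\sigma_2)$. The identical ansatz at such a point still yields $x = -B/A \in \cR$ with $\val(x) = m_{i_0 j_0}$, but with leading coefficient $-1$; the resulting lift $\tilde{M}$ then has exactly one negative entry, placing it in $\cR$ rather than $\cR^+$. Closedness of $\tropR$ then takes care of the remaining lower-dimensional strata, yielding $\tropC \subseteq \tropR$ and hence equality with the trivial reverse containment.
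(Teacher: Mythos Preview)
Your argument is correct and follows essentially the same route as the paper: restrict by closure to the interior of a maximal cone, lift all entries but one, solve the resulting linear equation $Ax+B=0$ for the remaining entry, and read off the sign and valuation of $x$ from the unique minimum-achieving permutation contributing to each of $A$ and $B$. The only cosmetic difference is that the paper first uses the lineality space to normalize $M_{i\sigma_1(i)}=0$ for all $i$ and then lifts the other entries to arbitrary elements of $\cR^+$, whereas you lift specifically to $t^{m_{ij}}$ and carry the valuations through directly; the computation is the same.

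One notational caution: in this paper $\tropC$ and $\tropR$ are macros that expand to $\trop_{\cC^+}$ and $\trop_{\cR^+}$, so the displayed equality in the theorem is the \emph{positive} statement $\trop_{\cC^+}(\cM_{n,n}^{n-1})=\trop_{\cR^+}(\cM_{n,n}^{n-1})$. Since $\trop_{\cC^+}(\cM_{n,n}^{n-1})$ is exactly the union of normal cones to opposite-sign Birkhoff edges (recalled just before the theorem from \cite[Proposition~3.1]{BLS}), the opposite-signs hypothesis already covers all of it; there is no ``broader equality'' to worry about here, and your final paragraph is not needed for this theorem. What you wrote there---that on a same-sign cone the identical ansatz produces a lift with a single negative entry, hence a lift to $(\cR^*)^{n\times n}$---is precisely the content of Corollary~\ref{cor:corank1}, which the paper states and proves separately.
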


This follows from \cite[Lemma 2.6]{VINZANT2012392}. But we give a constructive proof here.
    
\begin{proof}
    Since the really positive part of the tropical variety $\tropR(\cM_{n,n}^{n-1})$ is a closed subset of the tropical variety \cite[Theorem 4.1]{allamigeon2020tropical},  it suffices to consider only the interiors of maximal cones and $\tropC(\cM_{n,n}^{n-1})$ and
    assume that the minimum in the tropical determinant is achieved exactly at two distinct permutations $\sigma_1$ and $\sigma_2$. 
    
    By the lineality space of the tropical hypersurface, we can add multiples of the all-ones vector to any column, so without loss of generality we may assume that $M_{i\sigma_1(i)} = 0$ for every $i \in [n]$. Thus the tropical monomial $\sigma_1$ has valuation 0, and because the minimum is achieved at $\sigma_1$ and $\sigma_2$, $\sigma_2$ also has valuation 0 and any other monomial in the tropical hypersurface must be strictly positive.
    
    Because $\sigma_1$ and $\sigma_2$ are different monomials, $\sigma_1(i) \neq \sigma_2(i)$ for some $i$. Let us make a preliminary lift of $M$ to $\tilde{M} \in \cR_+^{n \times n}$ by arbitrarily choosing any positive real Puiseux series of the right valuation for every entry except the $(i,\sigma_1(i))$ entry, and let
    the $(i,\sigma_1(i))$ entry be an indeterminate variable~$x$.
    We need $\tilde{M}$ to be a singular matrix, so we need  $$\det(\tilde{M}) = Ax + B = 0.$$
It remains show that $x = -B/A$ is a positive real Puiseux series of valuation~$0$.

The determinant of $\tilde{M}$ is the sum of all permutation monomials. Thus $B$ consists of the sum of the monomials that do not contain $\tilde{M}_{i\sigma_1(i)}$. By construction, this contains the monomial $\sigma_2$, which has valuation 0. Every other monomial that it contains has strictly higher valuation, so cancellation cannot occur and the leading term of $B$ is precisely the leading term of the (usual) monomial $\sigma_2$ in~$\tilde{M}$. In particular, the sign of this monomial is determined exactly by the sign of the permutation~$\sigma_2$ because the entries of $\tilde{M}_{i\sigma_2(i)}$ all have positive leading term, and $\text{val}(B) = 0$.

   The part $Ax$ consists of permutations of $S_n$ that does include $\tilde{M}_{i\sigma_1(i)}$, and in particular this includes the monomial $\sigma_1$.  Since every $\tilde{M}_{j\sigma_1(j)}$ has valuation 0 by construction (and every other monomial has higher valuation by assumption), the leading term of $Ax$ comes from the leading term of $\sigma_1$. As above, the sign of this monomial is the sign of the permutation $\sigma_1$, and $\text{val}(A) = 0$. Thus, $x = -B/A$ has valuation $0-0 = 0$, and it is positive because $B$ and $A$ have differing signs, and so this yields a lift of $M$ to $\tilde{M}$, a singular matrix of positive real Puiseux series, as desired.
\end{proof}

\begin{corollary}
\label{cor:corank1}
    \label{cor:non-symm-hypersurface-real}
    The real tropicalization $\tropRg(\cM_{n,n}^{n-1})$ is equal to the tropicalization $\tropCg\cM_{n,n}^{n-1}$.
\end{corollary}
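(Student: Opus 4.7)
The plan is to repeat the construction from the proof of Theorem~\ref{thm:nonsymm-hypersurface-real-positive}, dropping the positivity constraint on the lift. The inclusion $\tropRg(\cM_{n,n}^{n-1}) \subseteq \tropCg(\cM_{n,n}^{n-1})$ is immediate from the definitions. For the reverse inclusion, since $\tropRg(\cM_{n,n}^{n-1})$ is closed by definition, it suffices to exhibit a real singular lift for each point $M$ in the relative interior of a maximal cone of $\tropCg(\cM_{n,n}^{n-1})$. At such an $M$, the minimum of the tropical determinant is attained at exactly two permutations $\sigma_1$ and $\sigma_2$, namely the two endpoints of the corresponding edge of the Birkhoff polytope.

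I would mimic the setup of the previous theorem: use the lineality to arrange $M_{j,\sigma_1(j)} = 0$ for all $j$, choose an index $i$ with $\sigma_1(i) \neq \sigma_2(i)$, and lift every entry $M_{jk}$ to a real Puiseux series $\tilde M_{jk} \in \cR$ of valuation $M_{jk}$ with an arbitrary nonzero real leading coefficient, except that $\tilde M_{i,\sigma_1(i)} = x$ is left as an indeterminate. The equation $\det(\tilde M) = Ax + B = 0$ is linear in $x$, and the same valuation bookkeeping as in Theorem~\ref{thm:nonsymm-hypersurface-real-positive} shows $\val(A) = \val(B) = 0$: the leading term of $B$ is the $\sigma_2$-monomial of $\tilde M$, while the leading term of $A$ is the $\sigma_1$-monomial of $\tilde M$ divided by $\tilde M_{i,\sigma_1(i)}$, and in each sum the relevant permutation is the unique one of valuation zero, so no cancellation can occur. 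Thus $x = -B/A$ lies in $\cR$ and has valuation $0 = M_{i,\sigma_1(i)}$, yielding a real singular lift of $M$.

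The main thing to verify is that the valuation analysis from Theorem~\ref{thm:nonsymm-hypersurface-real-positive} survives when the prescribed leading coefficients are allowed to be any nonzero reals rather than positive reals. Positivity was used in that proof only at the very last step, to conclude $-B/A > 0$ from the opposite-sign hypothesis on $\sigma_1,\sigma_2$; everything preceding that step is purely about valuations and about the leading coefficients being nonzero. Consequently, once the positivity requirement on the target is removed, the sign hypothesis on $\sigma_1,\sigma_2$ becomes unnecessary, and the argument extends uniformly and with no genuine new obstacle to give a lift in $\cR$.
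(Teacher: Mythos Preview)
Your proposal is correct and matches the paper's own proof essentially verbatim: the paper simply remarks that the construction in Theorem~\ref{thm:nonsymm-hypersurface-real-positive} goes through once one drops the requirement that $\sigma_1,\sigma_2$ have opposite sign and the positivity constraint on $A$, $B$, and $x$. Your write-up just spells this out in more detail.
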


\begin{proof}
    This follows exactly from the proof of Theorem~\ref{thm:nonsymm-hypersurface-real-positive}, except we are no longer concerned with $\sigma_1$ and $\sigma_2$ having different sign, nor with the signs of $A$, $B$, and $x$.
\end{proof}

More generally, it was shown in Example: Harmony of \cite{VINZANT2012392} that for any $2 \leq d \leq n$, the real and complex tropicalization coincide for the variety of $d \times n$ matrices of rank $< d$.

\section{Symmetric Corank 1}

The variety $\cS^{n-1}_n$ of singular symmetric $n \times n$ matrices defined by the $n\times n$ symmetric determinant, which is a polynomial in ${n+1 \choose 2}$ variables. Each monomial corresponds to permutations in $S_n$ up to forgetting the orientations of cycles. For example, in the $4 \times 4$ symmetric determinant, the permutations $(1\,2\,3\,4)$ and $(4\,3\,2\,1)$ both correspond to the monomial $m_{12}m_{23}m_{34}m_{14}$, but $(1\,3\,2\,4)$ gives a different monomial$m_{13}m_{14}m_{23}m_{24}$. The sign of a monomial is simply the sign of any permutation that could make up that monomial.

We can express the exponent vector of each monomial in the symmetric determinant as an upper triangular matrix $M$ of $0$s, $1$s, and $2$s. This symmetrization map $M\mapsto \frac{1}{2}(M+M^\top)$ is a linear isomorphism from the vector space of upper triangular matrices to the vector space of symmetric matrices, and we can study our Newton polytope in either of these vector spaces.  For a permutation matrix $P$, the symmetrization $\frac{1}{2}(P + P^\top)$ has every row and column sum equal to 1.

By \cite{Brualdi2017SymmetricHA} the vertices of this polytope correspond to permutations  without even cycles of length at least $ 4$, up to reorientation of cycles. There is a bijection between monomial terms of the symmetric determinant and semisimple graphs on $n$ vertices: for a permutation $\sigma$ let $G_\sigma$ on vertices~$[n]$ with an edge between $i$ and $\sigma(i)$ for every $i$. If $i$ is fixed by $\sigma$, then the graph has a loop at~$i$. The connected components of $G_\sigma$ are simply the cycles in the cycle decomposition of $\sigma$.
Then the vertices of the Newton polytope of the symmetric determinant are in bijection with semisimple graphs whose connected components consist only of loops (fixed points of a permutation), isolated edges (transpositions), and cycles of odd length (cycles of odd length without orientation). By Corollary 1.4 of \cite{ahmed2008polytopes}, the edges of the Newton polytope are pairs of vertices $(u,v)$ where the edge sets of their graphs $G_u = ([n], E_u)$ and $G_v = ([n], E_v)$ satisfy $|E_u \cup E_v| \le n+1$, and $G_{u,v} = ([n], E_u \cup E_v)$ contains at most one even cycle of length $\ge 4$.

\begin{example}
   Some examples of monomial terms of symmetric $4 \times 4$ determinant are shown in Table~\ref{tab:symm-det-monomials}. 
    All monomials in the table except the last one are vertices in the Newton polytope. The last monomial is the midpoint of the edge formed by the two transpositions. Every pair of vertices in the table forms an edge except $2x_{12}x_{13}x_{23}x_{44}$ and $-x_{11}x_{22}x_{34}^2$, whose the graph union has 7 edges. The edges between $2x_{12}x_{13}x_{23}x_{44}$ and the monomials that are the product of two disjoint 
transpositions is not in $\tropC (\cS_4^3)$, because the monomials have the same sign. 

The lattice length of an edge is one less than the number of lattice points in the edge.
The edges between $-x_{11}x_{22}x_{34}^2$ and the product-of-transpositions monomials have lattice length 1 and are in $\tropC(\cS_4^3)$. The edge formed by the two product-of-transpositions monomials has lattice length 2, with a 4-cycle as the midpoint. 
\end{example}

    \renewcommand\theadalign{bc}
    \renewcommand\theadfont{\bfseries}
    \renewcommand\theadgape{\Gape[7pt]}
    \renewcommand\cellgape{\Gape[7pt]}
    \setlength{\abovecaptionskip}{300pt}
    \begin{table}[h]
        \caption{Several monomials of the $4 \times 4$ symmetric determinant and equivalent ways to represent them.}
    \label{tab:symm-det-monomials}
        \renewcommand{\arraystretch}{1.2}
        \begin{tabular}{|*5{>{\renewcommand{\arraystretch}{1.2}}c|}}
        \hline
        Monomial & Permutation(s) & Upper Triangular & Symmetric Matrix & Graph\\
        \hline   
        \thead{
        $2x_{12}x_{13}x_{23}x_{44}$} & $(123)(4) \equiv (132)(4)$ & 
        \thead{${\begin{pmatrix}0 & 1 & 1 & 0\\ & 0 & 1 & 0\\ & & 0 & 0\\ & & & 1\end{pmatrix}}$} & $\thead{\begin{pmatrix}0 & \frac{1}{2} & \frac{1}{2} & 0 \\\frac{1}{2} & 0 & \frac{1}{2} & 0\\\frac{1}{2} & \frac{1}{2} & 0 & 0\\0 & 0 & 0 & 1\end{pmatrix}}$ & \begin{tikzpicture}[baseline=(current bounding box.east)]
        \node[draw, circle, inner sep=1pt] (1) {$1$};
        \node[draw, circle, inner sep=1pt] (2) [right of=1] {$2$};
        \node[draw, circle, inner sep=1pt] (3) [below of=1] {$3$};
        \node[draw, circle, inner sep=1pt] (4) [below of=2] {$4$};

        \draw (1) -- (2);
        \draw (1) -- (3);
        \draw (2) -- (3);
        \draw (4) to [out=90, in=180, looseness=5] (4);
        \end{tikzpicture}\\
        \hline
        $-x_{11}x_{22}x_{34}^2$ & $(1)(2)(34)$ & ${\thead{\begin{pmatrix}1 & 0 & 0 & 0\\&1 & 0 & 0\\& & 0 & 2\\& & & 0\end{pmatrix}}}$ & $\thead{\begin{pmatrix}1 & 0 & 0 & 0\\0 & 1 & 0 & 0\\0 & 0 & 0 & 1\\0 & 0 & 1 & 0\end{pmatrix}}$ & \begin{tikzpicture}[baseline=(current bounding box.east)]
        \node[draw, circle, inner sep=1pt] (1) {$1$};
        \node[draw, circle, inner sep=1pt] (2) [right of=1] {$2$};
        \node[draw, circle, inner sep=1pt] (4) [below of=1] {$4$};
        \node[draw, circle, inner sep=1pt] (3) [below of=2] {$3$};

        \draw (1) to [out=0, in=270, looseness=5] (1);
        \draw (2) to [out=180, in=270, looseness=5] (2);
        \draw (3) -- (4);
        \end{tikzpicture}\\
        \hline
        $x_{12}^2x_{34}^2$ & $(12)(34)$ & $\thead{\begin{pmatrix} 0 & 2 & 0 & 0\\& 0 & 0 & 0\\& & 0 & 2\\& & & 0\end{pmatrix}}$ & $\thead{\begin{pmatrix} 0 & 1 & 0 & 0\\1 & 0 & 0 & 0\\0 & 0 & 0 & 1\\0 & 0 & 1 & 0\end{pmatrix}}$& \begin{tikzpicture}[baseline=(current bounding box.east)]
        \node[draw, circle, inner sep=1pt] (1) {$1$};
        \node[draw, circle, inner sep=1pt] (2) [right of=1] {$2$};
        \node[draw, circle, inner sep=1pt] (4) [below of=1] {$4$};
        \node[draw, circle, inner sep=1pt] (3) [below of=2] {$3$};

        \draw (1) -- (2);
        \draw (3) -- (4);
        \end{tikzpicture}\\
        \hline
        $x_{14}^2x_{23}^2$ & $(14)(23)$ & $\thead{\begin{pmatrix} 0 & 0 & 0 & 2\\& 0 & 2 & 0\\& & 0 & 0\\ & & & 0\end{pmatrix}}$ & $\thead{\begin{pmatrix}0 & 0 & 0 & 1\\0 & 0 & 1 & 0\\0 & 1 & 0 & 0\\1 & 0 & 0 & 0\end{pmatrix}}$ &\begin{tikzpicture}[baseline=(current bounding box.east)]
        \node[draw, circle, inner sep=1pt] (1) {$1$};
        \node[draw, circle, inner sep=1pt] (2) [right of=1] {$2$};
        \node[draw, circle, inner sep=1pt] (4) [below of=1] {$4$};
        \node[draw, circle, inner sep=1pt] (3) [below of=2] {$3$};

        \draw (1) -- (4);
        \draw (2) -- (3);
        \end{tikzpicture}\\
        \hline
        $-2x_{12}x_{14}x_{23}x_{34}$ & $(1243) \equiv (3421)$ & $\thead{\begin{pmatrix}0 & 1 & 0 & 1\\& 0 & 1 &0\\& & 0 & 1\\& & &0\end{pmatrix}}$ & $\thead{\begin{pmatrix}0 & \frac12 & 0 & \frac12\\\frac12 & 0 & \frac12 & 0\\0 & \frac12 & 0 & \frac12\\\frac12 & 0 & \frac12 & 0\end{pmatrix}}$&\begin{tikzpicture}[baseline=(current bounding box.east)]
        \node[draw, circle, inner sep=1pt] (1) {$1$};
        \node[draw, circle, inner sep=1pt] (2) [right of=1] {$2$};
        \node[draw, circle, inner sep=1pt] (4) [below of=1] {$4$};
        \node[draw, circle, inner sep=1pt] (3) [below of=2] {$3$};

        \draw (1) -- (2);
        \draw (3) -- (4);
        \draw (1) -- (4);
        \draw (2) -- (3);
        \end{tikzpicture}\\
        \hline
    \end{tabular}
    \renewcommand{\arraystretch}{1}
    \end{table}

\begin{lemma}\label{lem:edges-of-Newt}
    The edges of the Newton polytope of the symmetric determinant have lattice length either 1 or 2. The edges of lattice length 2 are the convex hull of two vertices that differ on exactly $k$ transpositions for $k \ge 2$, and have as midpoint a monomial of the symmetric determinant that agrees with the two vertex monomials everywhere except on those $k$ transpositions, where it has a $2k$-cycle instead.
\end{lemma}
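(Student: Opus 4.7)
The plan is to combine the combinatorial description of edges of the Newton polytope from Corollary~1.4 of~\cite{ahmed2008polytopes} with a parity analysis of the lattice midpoint, working in the upper-triangular representation introduced above. In that representation each vertex $\sigma$ becomes a matrix $U_\sigma$ with entries in $\{0,1,2\}$: a $1$ on the diagonal marks a fixed point, a $1$ off-diagonal marks an edge of an odd cycle of length $\geq 3$, and a $2$ marks a transposition. Since every entry of $U_u-U_v$ then lies in $\{-2,-1,0,1,2\}$, the primitive integer direction of the segment $[u,v]$ can divide $U_u-U_v$ at most twice, so the lattice length of any edge is always $1$ or $2$, and it equals $2$ precisely when $U_u+U_v$ has entries all even.

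The heart of the argument is to translate this parity condition into a structural condition on $u$ and $v$. At each position the values $U_u(i,j)$ and $U_v(i,j)$ must share parity; since $1$ is the only odd value available, this forces $U_u(i,j)=U_v(i,j)$ whenever either one equals $1$. Entry by entry this will force $u$ and $v$ to have identical fixed points and identical odd cycles, so they can differ only on their transposition parts. Letting $M_u$ be the transpositions of $u$ absent from $v$ and $M_v$ the transpositions of $v$ absent from $u$, a short vertex-chasing argument then shows that $M_u$ and $M_v$ are disjoint perfect matchings on a common set $S\subseteq[n]$ of size $2k$, so $M_u\cup M_v$ is a disjoint union of even cycles of length $\geq 4$.

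Invoking the edge criterion of Corollary~1.4 of~\cite{ahmed2008polytopes}, which allows at most one even cycle of length $\geq 4$ in $G_u\cup G_v$, forces $M_u\cup M_v$ to consist of a single $2k$-cycle with $k\geq 2$. The midpoint $\tfrac12(U_u+U_v)$ then carries a $1$ at each of those $2k$ positions and coincides with $U_u=U_v$ elsewhere, which is exactly the upper-triangular exponent vector of the permutation obtained from $u$ by replacing its $k$ differing transpositions with this single $2k$-cycle. Because a $2k$-cycle and its reverse have the same sign, the coefficient of this monomial in the symmetric determinant equals $\pm 2\neq 0$, confirming it is genuinely a monomial of $\det$ as claimed. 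For the converse, given two vertices whose transposition differences form a single $2k$-cycle with $k\geq 2$, a direct count will give $|E_u\cup E_v|=n-T_0\leq n+1$ (where $T_0$ is the number of common transpositions), and the $2k$-cycle is the only even cycle of length $\geq 4$ in $G_u\cup G_v$, so~\cite[Corollary~1.4]{ahmed2008polytopes} confirms that $(u,v)$ is an edge.

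The step I expect to be the main obstacle is the parity-to-structure translation: carefully deducing from matching parities alone that $u$ and $v$ must share every loop and every odd cycle (both support and edge set), and that their leftover transpositions align on a common vertex set before invoking the Ahmed--De~Loera criterion. Once this is settled, everything else follows from that criterion together with the classical observation that two disjoint perfect matchings on a common vertex set decompose into alternating even cycles.
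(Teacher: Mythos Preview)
Your approach is essentially the same as the paper's: both bound the lattice length by noting that the exponent vectors have entries in $\{0,1,2\}$, both identify lattice length $2$ with the condition that $U_u-U_v$ has only even entries (forcing the differing positions to be transpositions), and both invoke the Ahmed--De~Loera edge criterion to conclude that the resulting alternating matchings form a single even cycle. Your write-up is more careful than the paper's at the parity-to-structure step and adds a converse and a verification that the midpoint is a genuine monomial of $\det$, neither of which is strictly required by the statement but both of which are correct.
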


\begin{proof}
    If $u$ and $v$ are vertices and $(u,v)$ is an edge in the Newton polytope of the determinant, denoted by $\mathcal{N}(\det(M))$, then considering $G_u = ([n], E_u)$ and $G_v = ([n], E_v)$, we have that $E_u \cup E_v$ contains at most one even cycle of length $\ge 4$.
    
    We consider whether or not the union $E_u \cup E_v$ contains an even cycle, and we will see that these determine the two types of edges of the polytope. Considering $a_u$ and $a_v$ as the exponent vectors of $u$ and $v$ respectively, $u$ and $v$ form an edge that contains a lattice point on the interior if and only if the gcd of the nonzero entries of the vector $a_u - a_v$ is greater than one.  
    Since the entries of $a_u$ and $a_v$ can only lie in $\{0, 1, 2\}$, the vector $a_u-a_v$ has entries in $\{0,\pm1,\pm2\}$, so the gcd $> 1$ only when $u-v$ contains only entries in $\{0, 2, -2\}$, and if so, the only possible interior lattice point is the midpoint $\frac12(a_u+a_v)$.  The exponent 2 in the monomial correspons to a transposition in the permutation (i.e. a connected component consisting of an isolated edge.) 
    In terms of graphs we have $G_{\frac12(u+v)} = ([n], E_u \cup E_v)$: every edge in the graphs of $G_u$ or $G_v$ is also an edge in $G_{\frac12(u+v)}$. Thus, an  isolated edge that belongs to $G_u$ but not to $G_v$, or vice versa, becomes an edge of a cycle.

    These new cycles cannot be odd: we can bipartition them based on whether they come from $G_u$ or $G_v$, and odd cycles are not 2-edge-colorable. Furthermore, if there are multiple new even cycles of length greater than or equal to $4$, then $u$ and $v$ did not form an edge, so this cycle must be unique and of even length $\ge 4$.
\end{proof}

The positive tropicalization of a variety $X$ defined by the vanishing of an ideal $I \subset \cC[x_1,\dots,x_n]$ has the following characterization
\[
\tropC(X) = \{w \in \RR^n \mid \mathop{in_w}(I) \text{ has a positive real zero}\}
\]
When $I$ is generated by a single polynomial $f= \sum_{a \in A} c_a x^a$, with all $c_a \neq 0$, the initial ideal $\mathop{in_w}(I)$ with respect to weight $w \in \RR^n$ is generated by the initial form 

$\mathop{in_w} f = \sum c_a x^a$ where the sum is taken over all $\{a \in A \mid a \cdot w \leq b\cdot w \, \forall b \in A\}.$

\begin{theorem}\label{thm:tropC+-for-symm-det}
    Let $(u,v)$ be an edge of the Newton polytope of the symmetric determinant, between two permutations $u$ and $v$. Its normal cone lies in $\tropC(\cS_n^{n-1})$ if and only if of the following holds:
    \begin{enumerate}
        \item It has lattice length 1, and $u$ and $v$ have opposite sign.
        \item It has lattice length 2, and the unique even cycle of length $\ge 4$ on its midpoint monomial has length divisible by $4$.
    \end{enumerate}
    In other words, $\tropC(\cS_n^{n-1})$ is the union of normal cones to edges containing a positive and negative monomial, or equivalently a positive and negative permutation.
\end{theorem}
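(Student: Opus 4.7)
The plan is to apply the characterization of positive tropicalization stated just before the theorem: since $\cS_n^{n-1}$ is the hypersurface cut out by the symmetric determinant $f$, the normal cone to an edge of its Newton polytope lies in $\tropC(\cS_n^{n-1})$ exactly when the corresponding initial form $\mathop{in_w}(f)$ has a zero in the positive orthant. By Lemma~\ref{lem:edges-of-Newt} every edge has lattice length $1$ or $2$, producing the two cases of the theorem.

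The first step is a closed-form coefficient formula. If a monomial of $f$ corresponds to a permutation with $q$ transpositions, $r$ cycles of odd length $\geq 3$, and $s$ cycles of even length $\geq 4$, then its sign is $(-1)^{q+s}$, and exactly $2^{r+s}$ permutations produce the same graph $G_\sigma$ because each cycle of length $\geq 3$ admits two orientations. Hence its coefficient in $f$ is $(-1)^{q+s}\cdot 2^{r+s}$.

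For a lattice-length-$1$ edge between vertex monomials $u,v$ the initial form is the binomial $c_u x^u + c_v x^v$, which has a positive real zero iff $c_u$ and $c_v$ have opposite signs; since vertex monomials satisfy $s=0$, the coefficient sign matches the permutation sign, giving case~(1). For a lattice-length-$2$ edge the initial form is the trinomial $c_u x^u + c_m x^m + c_v x^v$ with $m=\tfrac12(u+v)$. Because $u-v$ has entries in $\{0,\pm 2\}$, $z := x^{(u-v)/2}$ is a well-defined Laurent monomial, and the initial form factors as $(x^m/z)\,(c_u z^2 + c_m z + c_v)$, so positive zeros correspond to positive roots of the quadratic. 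Using the coefficient formula, $u$ and $v$ share all parameters with $s=0$, so $c_u = c_v = (-1)^{q_u}2^{r_u}$, while the midpoint has one fewer batch of $k$ transpositions and one extra $2k$-cycle, giving $c_m = (-1)^{(q_u-k)+1}2^{r_u+1} = (-1)^{k+1}\cdot 2\cdot c_u$. The quadratic then collapses to
\[
c_u\bigl(z+(-1)^{k+1}\bigr)^2 = 0,
\]
whose unique root $z=(-1)^k$ is positive precisely when $k$ is even, equivalently when the $2k$-cycle has length divisible by $4$. In that case setting every $m_{ij} = 1$ realizes $z=1$ as a genuine positive real zero of the initial form, since then $c_u + c_m + c_v = 0$.

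The main obstacle I anticipate is the coefficient bookkeeping at the midpoint: both a sign change (replacing the factor $(-1)^k$ from $k$ transpositions by the factor $-1$ from a single $2k$-cycle) and an extra factor of $2$ (the two orientations of the new even cycle) enter simultaneously, and it is this combined change that produces the perfect-square form above and isolates the parity of $k$ as the governing condition. Once the coefficient formula is in hand, the rest of the argument is a one-variable quadratic analysis.
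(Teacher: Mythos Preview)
Your proof is correct and follows the same approach as the paper: identify the initial form along an edge, and determine when it has a positive real zero by analyzing signs of coefficients in the binomial (lattice length~$1$) or trinomial (lattice length~$2$) and factoring the latter as a perfect square. Your version is slightly more explicit---you write down the closed-form coefficient formula $(-1)^{q+s}2^{r+s}$ and use the well-defined substitution $z = x^{(u-v)/2}$, whereas the paper works directly with the heuristic factorization $c(x^{u/2}-x^{v/2})^2$---but the content is the same.
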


\begin{proof}If the edge has lattice length $1$, then the initial ideal is, for some integer $c$, $\langle cx^u - x^v\rangle$ if $u$ and $v$ have opposite signs and is $\langle cx^u + x^v\rangle$ if $u$ and $v$ have the same sign.  It has a positive real root only in the case of opposite signs.

   If the edge has lattice length $2$, then $u$ and $v$ have the same sign, as their symmetric difference consists of an even number of transpositions. Their midpoint $\frac12(u+v)$ consists of a $2k$ cycle instead of the $k$ transpositions. Trading $k$ transpositions for an even cycle amounts to changing the sign $k+1$ times. When $k$ is odd, the midpoint has the same sign as $u$ and $v$, so the initial ideal cannot have a positive zero.
    When $k$ is even, the midpoint has opposite sign, and the initial ideal has form $\langle c(x^u - 2x^{\frac12(u+v)} + x^v) \rangle = \langle c(x^{\frac{u}2} - x^{\frac{v}2})^2 \rangle$ where $c$ is a monomial, so it has positive real root.
\end{proof}

Now we consider $\tropR(\cS_n^{n-1})$. We will first show in an example that $\tropR(\cS_n^{n-1}) \neq \tropC(\cS_n^{n-1})$ for $n \ge 4$, and then give a description of $\tropR(\cS_n^{n-1})$ in Theorem~\ref{thm:tropR+-for-symm-hypersurface}.

\begin{example}\label{ex:no-real-positive-lift}
    Consider the following matrix
    $$M = \begin{bmatrix} 2 & \red{0} & 1 & \blue{0} \\
    \red{0} & 2 & \blue{0} & 2\\
    1 & \blue{0} & 2 & \red{0}\\
    \blue{0} & 2 & \red{0} & 1\end{bmatrix}.$$

    It has minimum achieved at the even cycle $(1234)$, and therefore also at the pairs of transpositions $(12)(34)$, in red, and $(14)(23)$, in blue. By Theorem~\ref{thm:tropC+-for-symm-det}, it is in $\tropC(S_4^3)$.

    Any lift of $M$ into $\cR^+$ must take the form
    $$\tilde{M} = \begin{bmatrix} c_{11}t^2 & c_{12} & c_{13}t & c_{14}\\
    c_{12} & c_{22}t^2 & c_{23} & c_{24}t^2\\
    c_{13}t & c_{23} & c_{33}t^2 & c_{34} \\
    c_{14} & c_{24}t^2 & c_{34} & c_{44}t\end{bmatrix},$$
    where the $c_{ij}$ are all elements of $\cR^+$ with valuation zero. Further, this lift must make $\tilde{M}$ singular. As in the proof of Theorem~\ref{thm:nonsymm-hypersurface-real-positive}, we replace $c_{12}$ with a variable $x$, and examine the determinant of~$\tilde{M}$ as a quadratic polynomial in $x$. We want a root of this polynomial in $\cR^+$ with valuation zero.  So we want its discriminant to be nonnegative for some choice of $c_{ij}$s.
    
 By explicit expansion of the determinant, and using the fact that the $c_{ij}$'s and $x$ must have valuation $0$, we can check that the discriminant with respect to $x$ of the determinant of $\tilde{M}$ has the leading term $-8c_{13}c_{14}c_{23}^2c_{34}c_{44}t^2$. No choice of (positive) real Puiseux series for the $c_{ij}$s with valuation $0$ can make this term positive, so the discriminant must be negative, and there is no lift of $M$ into $\cR^+$, so $M \not\in \tropR(\cS^3_4).$
\end{example}

The lineality space of the tropical variety is spanned by matrices $\cM^{(k)} \in \RR^{n \times n}$ of the following form, for every $k \in [n]$, $$M_{ij}^{(k)} = \begin{cases} 2 & i=j=k,\\ 1 & i=k \text{ or } j=k, i \neq j, \\ 0 &\text{otherwise.}\end{cases}$$
which represents simultaneous tropical scaling of row $k$ and column $k$.

\begin{theorem}\label{thm:tropR+-for-symm-hypersurface}
    A $n \times n$ symmetric tropical matrix $M$ lies in $\tropR(\cS_n^{n-1})$ if and only if the minimum among the tropical monomials of the symmetric tropical determinant on $M$ satisfies one of the following:
    \begin{enumerate}
        \item the minimum is achieved at two monomials $u$ and $v$ that have opposite sign and form an edge of lattice length 1.
        \item the minimum is achieved at three monomials $u$, $v$, and $\frac12(u+v)$ where $u$ and $v$ differ along $2k$ transpositions for some $k\ge1$, and $\frac12(u+v)$ differs from $u$ and $v$ along a $4k$-cycle, and for any pair of adjacent elements $i,j$ on that cycle, the minimums of the tropical minors of $M_i$ and $M_j$ are achieved on monomials of the same sign, where $M_i$ is $M$ with the $i$-th row and $i$-th column deleted.
    \end{enumerate}
\end{theorem}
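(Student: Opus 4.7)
The plan is to combine Theorem~\ref{thm:tropC+-for-symm-det}, which identifies $\tropC(\cS_n^{n-1})$ as the union of normal cones of either lattice-length-$1$ edges with opposite-sign endpoints or lattice-length-$2$ edges with a $4k$-cycle midpoint, with the discriminant identity from Lemma~\ref{lem:factoring-discriminant}. Since $\tropR(\cS_n^{n-1}) \subseteq \tropC(\cS_n^{n-1})$, any $M \in \tropR$ must lie in one of these cone types; the sign condition in case (2) then captures exactly when a complex positive lift can be chosen to be real positive.

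For the forward direction, suppose $\tilde{M} \in (\cR^+)^{n \times n}$ is a symmetric singular lift of $M$. One need only verify the sign condition in case (2). Fix any adjacent pair $(i,j)$ on the $4k$-cycle and regard $\det \tilde{M}$ as a quadratic in the variable $m_{ij}$. Since $\tilde{M}_{ij}$ is a real root, the discriminant must be nonnegative; by Lemma~\ref{lem:factoring-discriminant} it equals $4\tilde{M}_i \tilde{M}_j$, so the leading coefficient of $\tilde{M}_i \tilde{M}_j$ is nonnegative. As the leading terms of $\tilde{M}_i$ and $\tilde{M}_j$ are controlled by the minimum-achieving monomials of the tropical minors $M_i$ and $M_j$, this forces the stated sign condition.

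The backward direction splits into two constructions. In case (1), imitating the proof of Theorem~\ref{thm:nonsymm-hypersurface-real-positive}, the lattice-length-$1$ hypothesis provides a position $(i,j)$ where $a_u$ and $a_v$ differ by $\pm 1$; lift all other entries to generic positive Puiseux series of the correct valuations and solve the resulting initial equation, which is a signed binomial whose opposite-sign hypothesis produces a real positive root that then extends by Hensel's lemma. In case (2), pick an adjacent pair $(i,j)$ on the $4k$-cycle satisfying the sign hypothesis, lift all other entries generically and positively, and write $\det \tilde{M} = A m_{ij}^2 + B m_{ij} + C$. Its initial form is a positive multiple of the perfect square $(x^{u/2}-x^{v/2})^2$, so naive Hensel lifting would only yield a complex solution; however, Lemma~\ref{lem:factoring-discriminant} identifies the discriminant with $4\tilde{M}_i \tilde{M}_j$, which under the sign hypothesis has positive leading coefficient, producing two real roots. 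A leading-term analysis then shows one of these roots is a positive Puiseux series of the required valuation, completing the lift.

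The main obstacle is case (2). Because the initial form of the determinant is a perfect square, real liftability cannot be decided at leading order and instead depends on second-order sign data. The key insight, illustrated by Example~\ref{ex:no-real-positive-lift}, is that this second-order information is precisely encoded in the principal minors $M_i$ through Lemma~\ref{lem:factoring-discriminant}; extracting the correct tropical condition requires tracking signs of the minimum-achieving monomials of $M_i$ and $M_j$ for every adjacent pair on the $4k$-cycle, since the lift, once it exists, forces the discriminant's nonnegativity at every such pair simultaneously.
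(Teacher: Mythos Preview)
Your approach is essentially the same as the paper's: both reduce case~(2) to analyzing $\det\tilde{M}$ as a quadratic in a single off-diagonal entry $m_{ij}$ and invoke Lemma~\ref{lem:factoring-discriminant} to identify the discriminant with $4\det\tilde{M}_i\det\tilde{M}_j$, while case~(1) is handled by the standard smooth-initial-form argument (the paper cites \cite[Lemma~2.6]{VINZANT2012392} rather than Hensel, but these amount to the same thing here).

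One point needs tightening. In your backward direction for case~(2) you say ``lift all other entries generically and positively'' and then assert that the discriminant ``under the sign hypothesis has positive leading coefficient.'' This does not follow from genericity alone: the hypothesis only guarantees that \emph{some} minimum-achieving monomial of $\trop\det M_i$ and \emph{some} of $\trop\det M_j$ share a sign, but if either tropical minor attains its minimum at monomials of both signs, a generic positive lift can land $\det\tilde{M}_i$ on the wrong side. The paper handles this by \emph{choosing} (rather than taking generic) lifts of the entries outside rows/columns $i$ and $j$ so that the leading terms of $\det\tilde{M}_i$ and $\det\tilde{M}_j$ realize a prescribed same-sign pair of minimum monomials. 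With that adjustment your sketch goes through; the subsequent ``leading-term analysis'' showing the root is positive of valuation~$0$ is exactly the paper's observation that $A$ and $C$ share sign while $B$ has the opposite sign, so $|B|>\sqrt{B^2-4AC}$ and one of $(-B\pm\sqrt{B^2-4AC})/2A$ has the correct sign and valuation.
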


In case (2), it is sufficient to check a single adjacent pair $i,j$ on the cycle. 

\begin{proof}
    We first consider the case where the minimum in the monomials of the symmetric tropical determinant of $M$ is achieved on two monomials of opposite sign forming an edge of lattice length~1. 
    The normal vectors to the edges of lattice length 1 gives the initial ideal $\langle cx^u - x^v \rangle$, which, after monomial change of basis, is equivalent to $\langle cx_1-x_2 \rangle$, for some $c > 0$. By \cite[Lemma 2.6]{VINZANT2012392}, since this is everywhere smooth, the cone dual to this edge is in $\tropRg(\cS_n^{n-1})$. Furthermore, any solution to $cx_2-x_1$ is (after possibly multiplying by $-1$, and modifying $x_i, i \ge 3$) a positive solution, so the dual cone is in $\tropR(\cS_n^{n-1})$. 

    Now we consider the edges of lattice length 2. By relabeling, we can assume that $u$ and $v$ agree everywhere except on the $4k$-cycle $(1 \, 2\, 3\, \ldots \,4k)$ (up to cycle orientation). Again assume without loss of generality that $u$ contains the transpositions $(12)(34)\ldots((4k-1)(4k))$, and $v$ contains the transpositions $(23)(45)\ldots((4k)1)$. Via tropical scaling, we can also assume that $M_{12} = M_{21} = 0$.

    Again let us consider an arbitrary (generic) lift of the entries of $M$ to a symmetric matrix of positive real Puiseux series, $\tilde{M}$, leaving indeterminate $x =\tilde{M}_{12} = \tilde{M}_{21}$.  We view the determinant $\det(\tilde{M})$ as a quadratic equation in $x$, and we will describe when $x$ has a positive real Puiseux series lift of valuation 0.

    We have $\det(\tilde{M})(x) = Ax^2 + Bx + C$. Again let $c$ be the minimum valuation of monomials in $\det(\tilde{M})$. As before, the term $Ax^2$ captures all monomials of the symmetric determinant that includes both $M_{12}$ and $M_{21}$. This includes the vertex $u$, and the leading term of $Ax^2$ is the leading term of the monomial corresponding to $u$, so $\text{val}(A) = c$ and $A$ shares sign with $u$. The term $Bx$ captures the midpoint $\frac12(u+v)$, which contains the $4k$-cycle $(1\,2\,3\,\ldots\,4k)$. Thus, $\text{val}(B) = c$. The term $C$ includes $v$, so $\text{val}(C) = c$ and $A$ and $C$ share signs with $u$ and $v$ and each other, and opposite signs with $B$.

    As $\det(\tilde{M})(x)$ is a quadratic equation in $x$, it roots in $\cC$ have the form $$x = \frac{-B \pm \sqrt{B^2 - 4AC}}{2A}.$$
    These solutions are in $\cR$ when the discriminant is nonnegative. Furthermore, if the discriminant is positive, since we see that $A$ and $C$ share sign, $|B| > \sqrt{B^2 - 4AC} > 0$.  Therefore, both $-B\pm \sqrt{B^2-4AC}$ have the same sign as $-B$, and, based on the sign of $-B$, at least one of $\pm\sqrt{B^2-4AC}$ has the same sign as $-B$, and so adding it to $-B$ cannot cause cancellation of the leading term. Thus, without loss of generality, $\text{val}(-B + \sqrt{B^2-4AC}) = \text{val}(-B) = \text{val}(B)$. Thus, there is a choice of $x$ so that $\text{val}(x) = \text{val}(B) - \text{val}(2A) = c - c = 0$, as desired, and as $-B$ and $A$ have the same sign, $x$ is positive.
    
    It remains to show that the discriminant is positive if and only if the minimum of $\trop\det(M_1)$ and $\trop\det(M_2)$ are achieved by monomials of the same sign. From Lemma~\ref{lem:factoring-discriminant}, the discriminant $\text{Disc}_{m_{12}}(\det(\tilde{M}))$ is equal to $\det(\tilde{M}_1)\det(\tilde{M}_2)$. 
    We can choose lifts of rows 1 and 2 of $\tilde{M}$ so that the signs of $\det\tilde{M}_1$ and $\det\tilde{M}_2$ are equal to signs of any choice of permutations attaining the minimum in $\trop\det(M_1)$ and $\trop\det(M_2)$ respectively.
    In particular, we can choose them to have the same sign, so that the discriminant $\text{Disc}_{m_{12}}(\det(\tilde{M}))$ is positive, and $\tilde{M}$ is a lift of $M$ as desired.

    If $\trop\det(M_1)$ and $\trop\det(M_2)$ have their minimums attained only on monomials of opposite signs, then any lifts of $M_1$ and $M_2$ must have $\det(\tilde{M_1})$ and $\det(\tilde{M_2})$ have opposite signs because cancellation cannot occur. Thus, $\text{Disc}_{m_{12}}(\det(\tilde{M}))$ must be negative.
\end{proof}

\begin{theorem}
    \label{thm:SingularSymmetric} For the variety $\cS_n^{n-1}$ of symmetric singular $n \times n$ matrices, the real and complex tropicalizations coincide, that is,
    \[\tropRg(\cS_n^{n-1}) = \tropCg(\cS_n^{n-1}).\]
\end{theorem}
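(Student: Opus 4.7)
The plan is to mirror the proof of Theorem~\ref{thm:tropR+-for-symm-hypersurface}, dropping the positivity hypothesis everywhere. Since $\tropRg(\cS_n^{n-1}) \subseteq \tropCg(\cS_n^{n-1})$ by definition and both sets are closed, it suffices to exhibit a real singular symmetric lift of every point $M$ in the relative interior of a maximal cone of $\tropCg(\cS_n^{n-1})$. Each such cone is dual to an edge of the Newton polytope of the symmetric determinant, which by Lemma~\ref{lem:edges-of-Newt} has lattice length $1$ or $2$.

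For edges of lattice length $1$ between $u$ and $v$, the initial form of the symmetric determinant is the binomial $c_u x^u + c_v x^v$ with $c_u, c_v \in \RR^*$. We invoke \cite[Lemma 2.6]{VINZANT2012392} exactly as in case~(1) of the proof of Theorem~\ref{thm:tropR+-for-symm-hypersurface}. Alternatively, one repeats the constructive argument of Theorem~\ref{thm:nonsymm-hypersurface-real-positive} using a generic real (not necessarily positive) preliminary lift with a single indeterminate entry: the singularity becomes a linear equation in $x$ whose solution $x = -B/A$ lies in $\cR$ of valuation $0$, with no sign condition on $A$ and $B$ required, since we no longer demand $x > 0$.

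For edges of lattice length $2$, we adapt the quadratic-in-$x$ framework from the proof of Theorem~\ref{thm:tropR+-for-symm-hypersurface}. Lifting $M$ to a symmetric matrix $\tilde M$ over $\cR$ with one off-diagonal entry $\tilde M_{ij}$ left as an indeterminate $x$, the singularity becomes $Ax^2 + Bx + C = 0$ with $\val(A) = \val(B) = \val(C)$ equal to the common minimum $c$. By Lemma~\ref{lem:factoring-discriminant}, the discriminant of this quadratic equals $4 \det \tilde M_i \det \tilde M_j$, so a real root of valuation $0$ exists precisely when $\det \tilde M_i$ and $\det \tilde M_j$ have leading coefficients of the same sign.

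The main obstacle, exactly as in Theorem~\ref{thm:tropR+-for-symm-hypersurface}, is arranging this sign condition, but it is strictly easier here. The leading coefficients of entries of $\tilde M$ may be chosen freely in $\RR^*$ rather than being constrained to be positive, and since $\det \tilde M_i$ depends on entries indexed in $[n]\setminus\{i\}$ while $\det \tilde M_j$ depends on entries indexed in $[n]\setminus\{j\}$, the leading signs of the two minors are independently adjustable by flipping the leading sign of, for example, an entry $\tilde M_{ik}$ with $k \neq i, j$ (which appears in $\det \tilde M_j$ but not in $\det \tilde M_i$). Viewing the leading coefficient of $\det \tilde M_i \det \tilde M_j$ as a polynomial in these real parameters, it is not identically zero and therefore takes positive values for some generic choice of signs and magnitudes. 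Choosing such a lift yields a real root $x$ of valuation $0$, and hence a real singular symmetric lift $\tilde M$ of $M$. Taking closures gives $\tropCg(\cS_n^{n-1}) \subseteq \tropRg(\cS_n^{n-1})$, completing the proof.
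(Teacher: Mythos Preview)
Your approach is essentially the paper's: split by edge type, dispatch lattice-length-$1$ edges via \cite[Lemma 2.6]{VINZANT2012392}, and for lattice-length-$2$ edges set $x=\tilde M_{ij}$ for an edge $(i,j)$ on the even cycle, obtain $\det\tilde M=Ax^2+Bx+C$ with $\val A=\val B=\val C$, and use Lemma~\ref{lem:factoring-discriminant} to reduce to making $\det\tilde M_i\det\tilde M_j$ nonnegative by an appropriate choice of real leading coefficients. The paper uses exactly this reduction and the same separating observation that entries $\tilde M_{jk}$ lie in $\tilde M_i$ but not in $\tilde M_j$.

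Two minor points. First, your aside that for lattice-length-$1$ edges ``the singularity becomes a linear equation in $x$'' only holds when the indeterminate is a diagonal entry; for off-diagonal $x=\tilde M_{ij}$ the symmetric determinant is quadratic in $x$, so the direct analogy with Theorem~\ref{thm:nonsymm-hypersurface-real-positive} breaks down. This does not affect your proof, since the Vinzant citation already suffices. Second, and more substantively, the inference ``not identically zero, therefore takes positive values for some generic choice'' is not valid in general (e.g.\ $-(c_1^2+c_2^2)$). The paper does not argue this way: it instead points to the concrete mechanism that every monomial of $\det\tilde M_1$ contains some entry $\tilde M_{2j}$ from row/column~$2$, while no monomial of $\det\tilde M_2$ does, so one can flip signs in row/column~$2$ to adjust the sign of $\det\tilde M_1$ without touching $\det\tilde M_2$. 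Replacing your non-vanishing step with this explicit sign-flip (or an equivalent concrete argument) makes your proof match the paper's.
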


\begin{proof}
    The edges of lattice length 1 have initial ideal $\langle cx^u + x^v \rangle$ for $c \in \RR$, so after monomial change of basis we have $\langle cx_1+x_2 \rangle$. The normal cone to this edge is in $\tropRg(\cS_n^{n-1})$ by \cite[Lemma 2.6]{VINZANT2012392}.
  For edges of lattice length $2$, we follow from the same argument in the proof of Theorem~\ref{thm:tropR+-for-symm-hypersurface}.  However now the sign of $\tilde{M}_1\tilde{M}_2$ will always be positive for appropriate choice of the signs of $\tilde{M}_{ij}$: every monomial in $\det{\tilde{M}_1}$ contains some element $\tilde{M}_{2j}$ from the second column of $\tilde{M}$, and no monomial in $\det{\tilde{M}_2}$ contains any, so we can simply flip the sign of $\tilde{M}_{2j}$ if $\tilde{M}_1\tilde{M}_2$ is negative. For edges of lattice length 2 where the even cycle in $\frac12(u+v)$ has length $4k+2$, the same argument works directly, only the sign of $x$ may change, which doesn't matter for $\tropRg(\cS_n^{n-1})$.
\end{proof}

\section*{Acknowledgements}
We are grateful to Marie-Charlotte Brandenburg, Georg Loho, and Rainer Sinn for discussions about the paper~\cite{BLS} and related topics, and to Felipe Rinc\'on and Cynthia Vinzant for discussions about real tropicalizations. We thank Jonathan Leake for the observation of Lemma~\ref{lem:factoring-discriminant} and Changxin Ding for pointing us to the example in the proof of Lemma~\ref{lem:rnk3}. 
MC and JY were partially supported by NSF grant \#1855726. 
\bibliography{main}
\bibliographystyle{alpha}
\vspace*{1cm}
\end{document}